\font\logic=msam10 at 10pt
\newcommand{\forces}{\mbox{\logic\char'015}}
\newcommand{\restrict}{\mbox{\logic\char'026}}
\def\underTilde#1{{\baselineskip=0pt\vtop{\hbox{$#1$}\hbox{$\sim$}}}{}}
\def\undertilde#1{{\baselineskip=0pt\vtop
  {\hbox{$#1$}\hbox{$\scriptscriptstyle\sim$}}}{}}
\newcommand{\lbrak}{[\mspace{-2.2mu}[}
\newcommand{\rbrak}{]\mspace{-2.2mu}]}
\newcommand{\truth}[1]{\lbrak #1 \rbrak}
\newcommand{\creals}{{^{\omega}}2}
\newtheorem{thrm}{Theorem}[section]
\newtheorem{lem}[thrm]{Lemma}
\newtheoremstyle{hdefinition}%
  {\topsep}%
  {\topsep}%
  {\upshape}
  {}%
  {\bfseries}%
  {.}
  { }%
  {\thmnumber{#2 }\thmname{#1}\thmnote{ \rm(#3)}}%
\newtheoremstyle{hclaim}%
  {\topsep}%
  {\topsep}%
  {\itshape}%
  {}%
  {\bfseries}%
  {.}
  { }%
  {\thmname{#1}\thmnote{ \rm#3}}%
\newtheoremstyle{hnotation}%
  {\topsep}%
  {\topsep}%
  {\upshape}%
  {}%
  {\bfseries}%
  {.}
  { }%
  {\thmname{#1}\thmnote{ \rm#3}}%
\theoremstyle{hclaim}
\newtheorem*{claim*}{Claim}
\theoremstyle{hdefinition}
\newtheorem{df}[thrm]{Definition}
\newtheorem{remark}[thrm]{Remark}
\newtheorem{ques}[thrm]{Question}
\theoremstyle{hclaim}
\newtheorem{claim}{Claim}
\theoremstyle{hnotation}
\newtheorem{notation}{Notation}
\begin{document}

\title{Universally measurable sets in generic extensions
\thanks{
The first author is supported in part by NSF grant DMS-0801009.
The research of the second author is supported by NSF grant DMS-0556223.
The research of the third author is supported by the United
States-Israel Binational Science Foundation.
The research in this paper began during a visit by the
first author to Rutgers University in October 2008, supported by NSF
grant DMS-0600940. Publication no. 947 of second author.}}

\author{Paul Larson \and Itay Neeman \and Saharon Shelah}


\pagenumbering{arabic}
\maketitle

\begin{abstract} A subset of a topological space is said to be \emph{universally measurable} if it is measured by the completion of each countably additive $\sigma$-finite Borel measure on the space, and \emph{universally null} if it has measure zero for each such atomless measure. In 1908, Hausdorff proved that there exist universally null sets of real numbers of cardinality $\aleph_{1}$, and thus that there exist at least $2^{\aleph_{1}}$ such sets. Laver showed in the 1970's that consistently there are just continuum many universally null sets of reals. The question of whether there exist more than continuum many universally measurable sets of reals was asked by Mauldin in 1978. We show that consistently there exist only continuum many universally measurable sets. This result also follows from work of Ciesielski and Pawlikowski on the iterated Sacks model. In the models we consider (forcing extensions by suitably-sized random algebras) every set of reals is universally measurable
if and only if it and its complement are unions of ground model continuum many Borel sets.
\end{abstract}

\noindent MSC 2010 : 03E35; 28A05

\vspace{\baselineskip}

A subset of a topological space is said to be \emph{universally measurable} if it is measurable with respect to every complete $\sigma$-finite Borel measure on the space, and \emph{universally null} if it has measure zero for each such atomless measure (see \cite{Ke, Ni}, and 434D of \cite{F4}, for instance).  Hausdorff \cite{H} proved that there exists a universally null set of reals of cardinality $\aleph_{1}$, which implies that there exist at least $2^{\aleph_{1}}$ such sets (more recently, Rec\l{}aw \cite{Re} has shown that every set of reals which is wellordered by a universally measurable relation is universally null). Laver (unpublished, see \cite{L}, pages 576--578 of \cite{M83} and Section 1.1 of \cite{CiPa}) showed that consistently there are just continuum many universally null sets of reals. The question of whether there exist more than continuum many universally measurable sets of reals was asked by Mauldin in 1978 (personal communication), though the question may not have appeared in print until 1984 (see \cite{M84, Ni}). We show that in a forcing extension by a suitable random algebra ($\mathbb{B}(\kappa)$ for $\kappa = \kappa^{\mathfrak{c}}$, where $\mathfrak{c}$ denotes the cardinal $2^{\aleph_{0}}$) there exist only continuum many universally measurable sets of reals, and moreover that
every set of reals in such an extension is universally measurable if and only if it and its complement are unions of ground model continuum many Borel sets.
We present two proofs of the consistent negative answer to Mauldin's question in the random algebra extension.

The negative answer to Mauldin's question also follows from work of Ciesielski and Pawlikowski on the axiom CPA in the iterated Sacks model. In Section \ref{CPA} we give a proof that their axiom CPA$_{cube}$ implies that every universally measurable set is the union of at most $\aleph_{1}$-many perfect sets and singletons, which is a very slight modification of their proof that CPA$_{cube}$ implies that every universally null set has cardinality less than or equal to $\aleph_{1}$. Moreover, their axiom CPA$^{game}_{cube}$ implies that the perfect sets can be taken to be disjoint (see Section 2.1 of \cite{CiPa}).


In the final section we discuss several open questions and other issues regarding universally measurable sets.

\begin{notation} We use the symbol $\subseteq$ to mean ``subset", and the symbol $\subset$ to mean ``proper subset."
\end{notation}

\section{Basic definitions and standard facts}

A \emph{measure} on a set $X$ is a function $\mu$ whose domain is some $\sigma$-algebra of subsets of $X$, with codomain $[0, \infty]$, such that $\mu$ is countably additive for disjoint families. A set is said to be \emph{measurable} with respect to $\mu$ if it is in the domain of $\mu$. A \emph{Borel measure} is a measure on a topological space whose domain contains the Borel sets. A measure is \emph{complete} if subsets of sets of measure $0$ are in the domain of the measure (and thus have measure 0). The \emph{completion} of a measure is the smallest complete measure extending it.  If $\mu$ is a Borel measure on a topological space $X$, and $\mu^{*}$ is the completion of $\mu$, then a set $A \subseteq X$ is in the domain of $\mu^{*}$ if and only if there is a set $B$ in the domain of $\mu$ such that the symmetric difference $A \bigtriangleup B$ is contained in a set of $\mu$-measure 0 (see 212C of \cite{F2}).
A measure $\mu$ on a set $X$ is a \emph{probability} measure if $\mu(X) = 1$, \emph{finite} if $\mu(X)$ is finite, $\sigma$-\emph{finite} if $X$ is a countable union of sets of finite measure, and \emph{atomless} if singletons have measure 0.

The set of universally measurable sets does not change if one replaces ``$\sigma$-finite" with ``finite" or ``probability" (see 211X(e) of \cite{F2}) or requires the measures to be atomless. It follows that universally null sets are universally measurable. Note that no perfect set is universally null, since
any such set has measure 1 for some Borel probability measure.

The following theorem (Theorem 15.6 of \cite{Ke}) shows among other things that the cardinality of the set of universally measurable subsets of any complete separable metric space (i.e., \emph{Polish} space) is the same, since any such space can be continuously injected into any other (see also Remark 2.9 of \cite{Ni}).

\begin{thrm}\label{isom} If $X$ and $Y$ are Polish spaces and $\mu$ and $\nu$ are atomless Borel probability measures on $X$ and $Y$
respectively, then there is a Borel bijection $f \colon X \to Y$  such that
$\mu(I) = \nu(f[I])$ for all Borel $I \subset X$.
\end{thrm}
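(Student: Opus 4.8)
The plan is to reduce everything to a single reference space and compose. It suffices to prove that for every Polish space $X$ carrying an atomless Borel probability measure $\mu$ there is a Borel bijection $g \colon X \to \creals$ with $g_{*}\mu = \lambda$, where $\lambda$ is the uniform (coin-tossing) product measure on $\creals$: granting this for $(X,\mu)$ and $(Y,\nu)$ via maps $g_{X}$ and $g_{Y}$, the composition $f = g_{Y}^{-1} \circ g_{X}$ is the desired measure-preserving Borel bijection, since a Borel bijection between standard Borel spaces has Borel inverse and $f_{*}\mu = (g_{Y}^{-1})_{*}(g_{X})_{*}\mu = (g_{Y}^{-1})_{*}\lambda = \nu$.

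The engine of the construction is the classical fact that an atomless measure takes all intermediate values: if $A$ is Borel with $\mu(A) > 0$ and $0 \le r \le \mu(A)$, then some Borel $B \subseteq A$ has $\mu(B) = r$. I would prove this by fixing a Borel injection of $X$ into $[0,1]$, transporting $\mu$, and noting that $t \mapsto \mu(A \cap [0,t])$ is continuous (continuity being exactly atomlessness) and runs from $0$ to $\mu(A)$, so the intermediate value theorem supplies a suitable initial segment. In particular every positive-measure Borel set splits into two Borel halves of equal measure.

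Using this I would build, by recursion on $2^{<\omega}$, a refining family of Borel sets $\langle X_{s} : s \in 2^{<\omega}\rangle$ with $X_{\varnothing} = X$, $X_{s} = X_{s0} \sqcup X_{s1}$, and $\mu(X_{s}) = 2^{-|s|}$. This yields a Borel map $\varphi \colon X \to \creals$ sending $x$ to the unique branch $\alpha$ with $x \in \bigcap_{n} X_{\alpha \restriction n}$. Since $\varphi^{-1}(N_{s}) = X_{s}$ for every basic clopen $N_{s}$ and these sets generate the Borel $\sigma$-algebra, $\varphi_{*}\mu = \lambda$. To upgrade $\varphi$ to an almost-everywhere injection I would interleave the halving steps with refinement by a fixed countable family of Borel sets separating the points of $X$, so that along $\mu$-almost every branch the pieces $X_{\alpha \restriction n}$ shrink to a single point; this produces co-null Borel sets $X' \subseteq X$ and $C' \subseteq \creals$ and a measure-preserving Borel bijection $g_{0} \colon X' \to C'$. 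Keeping the exact dyadic measures while simultaneously forcing separation is the technical crux, and is the step I expect to demand the most care.

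Finally I would patch the null remainders into a genuine bijection. The sets $X \setminus X'$ and $\creals \setminus C'$ are $\mu$- and $\lambda$-null standard Borel spaces, so any Borel bijection between them is automatically measure preserving, both sides being null. Each space contains an uncountable Borel set of measure zero (build a Cantor scheme of Borel sets of rapidly decreasing measure, using the halving lemma), so after moving one such set out of $X'$ into $X\setminus X'$ and removing its $g_{0}$-image from $C'$ — operations that disturb nothing measure-theoretically — I may assume both remainders have cardinality exactly $\mathfrak{c}$ while staying null. By the Borel isomorphism theorem the two remainders are then Borel isomorphic, and glueing such an isomorphism to $g_{0}$ produces the required $g \colon X \to \creals$, and hence $f$. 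The only genuinely delicate points are the combined halving/separation construction above and this cardinality bookkeeping for the exceptional null sets.
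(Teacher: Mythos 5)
Your proposal concerns a statement the paper does not actually prove: Theorem \ref{isom} is imported by citation (the paper points to Theorem 15.6 of \cite{Ke}; the measure-preserving content is Kechris's isomorphism theorem for atomless Borel probability measures, whose textbook proof is essentially what you reconstruct). So the only fair comparison is with the standard argument, and your outline matches it: reduce both $(X,\mu)$ and $(Y,\nu)$ to the reference space $(\creals,\lambda)$, build a measure-preserving Borel bijection between co-null Borel sets, and patch the null remainders via the Borel isomorphism theorem. The Sierpi\'{n}ski intermediate-value lemma via a Borel injection into $[0,1]$, the dyadic scheme with $\mu(X_{s})=2^{-|s|}$ pushing $\mu$ to $\lambda$, the composition $f=g_{Y}^{-1}\circ g_{X}$ (with Borel inverses by Lusin--Suslin), and the final cardinality bookkeeping (both remainders made uncountable, hence Borel isomorphic, and any bijection between null sets is trivially measure preserving) are all correct; note that atomlessness forces $X$ and $Y$ to be uncountable, which is what makes that bookkeeping possible.

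Two steps need more than you say, and the first is exactly the crux you flagged. As literally described --- interleaving the halving steps with \emph{refinement} by a separating family $\{D_{k}\}$ --- the construction is impossible: if at some finite level every cell were contained in $D_{k}$ or in its complement, then $\mu(D_{k})$ would be a dyadic rational, which it need not be. So you cannot refine; you can only approximate in the measure algebra, and that suffices. The standard repair: at each stage dedicated to the target $D_{k}$, split every cell $X_{s}$ so that one child is \emph{pure} (put all of $X_{s}\cap D_{k}$, or all of $X_{s}\setminus D_{k}$, whichever has measure at most $2^{-|s|-1}$, into one child and top it up to exact measure $2^{-|s|-1}$ by the intermediate-value lemma). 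Each such stage at least halves the total measure of cells meeting both $D_{k}$ and its complement, and purity is inherited under later splits; revisiting each $k$ infinitely often, each $D_{k}$ agrees modulo a $\mu$-null set with $\varphi^{-1}(E_{k})$ for some Borel $E_{k}\subseteq\creals$. Off the null set $N=\bigcup_{k}\bigl(D_{k}\bigtriangleup\varphi^{-1}(E_{k})\bigr)$ the branch of $x$ determines its membership in every $D_{k}$, hence determines $x$; so $\varphi$ is injective on a co-null Borel set, with Borel co-null image by Lusin--Suslin. This is generation of the measure algebra mod null, not refinement, and it is all your argument needs.

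The second, smaller, gap: an uncountable Borel null set does \emph{not} follow from a Cantor scheme of Borel sets of rapidly decreasing measure alone, since the branch intersections of a scheme of abstract Borel sets can all be empty, leaving an empty or countable limit set. Use tightness of Borel probability measures on Polish spaces instead: take the pieces to be compact, of positive measure, and of vanishing diameter (atomlessness plus inner regularity lets you split any compact set of positive measure into two disjoint compact subsets of positive measure and small diameter); then every branch intersection is a singleton and the limit set is a null Cantor set, inside $X'$ if you start there. With these two repairs your proof is complete, and it is, modulo presentation, the classical proof that the paper replaces by a citation.
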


Combining this fact with Lusin's theorem \cite{Lu} that analytic sets are Lebesgue measurable, we have that analytic sets are universally measurable.

\section{The random algebra and measures}\label{randmeas}

We review in this section some standard facts about the random algebra, and the way in which names in the random algebra give rise to measures.
Our primary references are \cite{Ku, BaJu, F5}.

\begin{notation} Fixing a nonempty set $X$, for each $x \in X$ and each $i \in 2$, we let $C^{i}_{x}$ denote the set $\{ f \in {^{X}}2 \mid f(x) = i\}$.
\end{notation}

\begin{df} The \emph{Baire} subsets of ${^{X}}2$ are the members of the smallest $\sigma$-algebra containing $\{ C^{i}_{x} \mid x \in X, \, i \in 2\}$.
\end{df}

As pointed out in \cite{Ku}, the classes of Baire sets and Borel sets coincide when $X$ is countable, but otherwise singletons are Borel but not Baire.


\begin{notation} We let $\mu_{X}$ denote the completion of the standard product measure on the Baire subsets of ${^{X}}2$, where $\mu_{X}(C^{i}_{x}) = 1/2$ for each $x \in X$ and $i \in 2$ (see 254J of \cite{F2}, for instance).
\end{notation}

\begin{notation} We let $\mathcal{N}_{X}$ denote the set $\{ B \subset {^{X}2 \mid \mu_X}(B) = 0\}$.
\end{notation}


\begin{df} For each Baire $B \subseteq {^{X}}2$, $[B]_{\mu_{X}}$ is the set of Baire $D \subseteq {^{X}}2$ such that $B \bigtriangleup D \in \mathcal{N}_{X}$. The \emph{random algebra} $\mathbb{B}(X)$ is the partial order whose conditions are sets of the form $[B]_{\mu_{X}}$, where $B$ is a
non-$\mu_{X}$-null Baire subset of ${^{X}}2$, with the order $$[B]_{\mu_{X}} \leq [D]_{\mu_{X}}$$ (i.e.,  $[B]_{\mu_{X}}$ is stronger than $[D]_{\mu_{X}}$) if and only if $B \setminus D \in \mathcal{N}_{X}$.
Noting that $\mu_{X}(C) = \mu_{X}(B)$ for all $C \in [B]_{\mu_{X}}$, we let $\mu_{X}([B]_{\mu_{X}})$ denote this common value.
\end{df}

Forcing with $\mathbb{B}(X)$ adds a \emph{generic function} $F \colon X \to 2$ defined by letting $F(x) = i$ if and only if $C^{i}_{x}$ is in the generic filter. We sometimes refer to the generic function $F$ without mentioning the corresponding filter $G$, and talk of the model $V[F]$.

\begin{notation}
Given $B \subseteq {^{X}}2$, $Y \subseteq X$ and $y \in {^{Y}}2$, we let $$B_{y} = \{ z \in {^{X \setminus Y}}2 \mid y \cup z \in B\}.$$
\end{notation}

Fubini's Theorem in this context says that for any nonempty $Y \subseteq X$ and any
Baire $B \subseteq {^{X}}2$, $B \in \mathcal{N}_{X}$ if and only if $\{ y \in {^{Y}}2 \mid B_{y} \not\in \mathcal{N}_{X\setminus Y} \} \in \mathcal{N}_{Y}$ (see \cite{Ku} or 252B and 254N of \cite{F2}).

\begin{df} Given $B \subseteq {^{X}}2$ and $Y \subseteq X$, we say that $B$ has \emph{support} $Y$ if for all $y \in {^{Y}}2$,
$B_{y} \in \{ {^{X\setminus Y}}2, \emptyset\}$ (so the support of $B$ is not unique).
A condition $[B]_{\mu_{X}}$ has \emph{support} $Y$ if some $B \in [B]_{\mu_{X}}$ has support $Y$. We say that $B$ has \emph{finite support} if $B$ has support $Y$ for some finite $Y$, and \emph{countable support} if it has support $Y$ for some countable $Y$, and similarly for $[B]_{\mu_{X}}$.
\end{df}

By Fubini's Theorem, $[B]_{\mu_{X}}$ has support $Y$ if and only if $$\{ y \in {^{Y}}2 \mid \mu_{X \setminus Y}(B_{y}) \not\in \{0,1\}\} \in \mathcal{N}_{Y}.$$




\begin{remark} Baire sets are built in countably many stages by countable unions and complements from sets of the form $C^{i}_{x}$, for $x \in X$ and $i \in 2$. It follows that they
have countable support, and can be coded by countable subsets of $X\times \omega$.
These codes give rise to reinterpretations of these sets in generic extensions. For infinite cardinals $\gamma$, we fix a (suppressed) coding
of Baire subsets of ${^{\gamma}}2$ by elements of ${^{\gamma}}2$, and let $eval(c)$ denote the Baire subset of ${^{\gamma}}2$ coded by $c$.
We refer the reader to \cite{Ku} for more on codes. Whenever possible, we prefer to talk about Baire and Borel sets in the ground model and their reinterpretations in generic extensions, and suppress mention of codes and their evaluations.

Given a code $c$ for a Baire subset of ${^{X}}2$, if the set coded by $c$ is empty, this remains true in any forcing extension.
(For each code $c$ for a Borel sets of reals, one can build a tree on $\omega \times \omega$ whose projection is (absolutely) equal to $eval(c)$, working recursively by Borel rank.
If the projection of such a tree is empty, this is witnessed absolutely by a ranking function. The corresponding fact for Baire sets follows,
as they have countable support.)
It follows that relations such as $x \in B$, $x \not\in B$ and $B \in \mathcal{N}_{X}$ (for a given Baire set $B$ in the ground model) are preserved when one passes from a Baire set to its reinterpretation in a
generic extension.

Similarly, a Polish space in an inner model naturally reinterprets in an outer model, via any countable dense subset. We typically ignore the distinction.
\end{remark}

Using the fact that $\mathbb{B}(X)$ is c.c.c., and arguing by induction on the rank of the Baire subsets of ${^{X}}2$, it is not hard to see that the generic function $F$ is a member of every member (reinterpreted) of the generic filter. Moreover, a function $F \colon X \to 2$ is $\mathbb{B}(X)$-generic over some model of ZF if and only if it is a member of every (reinterpreted) Baire set of $\mu_{X}$-measure 1 in the model.

Since $\mu_{X}$ is a finite countably additive measure, $\mathbb{B}(X)$ has no uncountable antichains. It follows that each $\mathbb{B}(X)$-name $\undertilde{\eta}$ for a real is decided by the restriction of the generic filter to $\mathbb{B}(Y)$, for some countable $Y \subseteq X$ depending only on the name itself, in which case we say that $\undertilde{\eta}$ has \emph{support} $Y$ (again, the support of $\undertilde{\eta}$ is not unique).




In order to study the set of universally measurable sets in the random algebra extension, we are going to look at those measures which are induced by names for reals. Let $P$ be a Polish space. For each $\mathbb{B}(X)$-name $\undertilde{\eta}$ for an element of $P$, there is a Borel probability measure $\sigma$ on $P$ defined by letting $\sigma(I)$ (for each Borel $I \subseteq \creals$) be $\mu_{X}(\truth{\undertilde{\eta} \in \check{I}})$, where $\truth{\undertilde{\eta} \in \check{I}}$ is the condition $p$ in $\mathbb{B}(X)$ with the property that $p \forces \undertilde{\eta} \in \check{I}$ and every condition incompatible with $p$ forces that $\undertilde{\eta} \not\in \check{I}$. To see that there is such a $p$, note that $\mathbb{B}(X)$ is c.c.c. and the collection of Baire subsets of ${^{X}}2$ is a $\sigma$-algebra.

The following is a standard fact about the random algebra (see Theorem 552P of \cite{F5} or Theorem 3.13 of \cite{Ku}).

\begin{thrm}\label{randfact} If $Y$ is a nonempty proper subset of $X$, then $\mathbb{B}(X)$ is forcing-equivalent to the iteration $\mathbb{B}(Y) * \mathbb{B}(X \setminus Y)$.
\end{thrm}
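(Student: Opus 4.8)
The plan is to establish forcing-equivalence by characterizing the generics on both sides and showing they are interdefinable. Writing $Z = X \setminus Y$, I claim that a function $F \colon X \to 2$ is $\mathbb{B}(X)$-generic over a model $V$ if and only if $F \restrict Y$ is $\mathbb{B}(Y)$-generic over $V$ and $F \restrict Z$ is $\mathbb{B}(Z)$-generic over $V[F \restrict Y]$. Granting this, the map sending a $\mathbb{B}(X)$-generic $F$ to the pair consisting of the $\mathbb{B}(Y)$-filter induced by $F \restrict Y$ and the $\mathbb{B}(Z)^{V[F \restrict Y]}$-filter induced by $F \restrict Z$ is a bijection between generics satisfying $V[F] = V[F \restrict Y][F \restrict Z]$, which is exactly what forcing-equivalence of $\mathbb{B}(X)$ with $\mathbb{B}(Y) * \mathbb{B}(Z)$ amounts to. Throughout I would use the characterization recorded above — that $F$ is $\mathbb{B}(X)$-generic iff it belongs to every (reinterpreted) Baire set of $\mu_{X}$-measure $1$ in the model — together with Fubini's theorem and the absoluteness of the coded Baire relations noted in the remark.

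Two of the three implications go through cylinders and are routine. If $F$ is $\mathbb{B}(X)$-generic and $A \subseteq {}^{Y}2$ is a ground-model Baire set with $\mu_{Y}(A) = 1$, then $\{f \in {}^{X}2 : f \restrict Y \in A\}$ has $\mu_{X}$-measure $1$ by Fubini, so $F$ lies in it and hence $F \restrict Y \in A$; thus $F \restrict Y$ is $\mathbb{B}(Y)$-generic over $V$. Conversely, suppose $F \restrict Y$ is $\mathbb{B}(Y)$-generic over $V$ and $F \restrict Z$ is $\mathbb{B}(Z)$-generic over $V[F \restrict Y]$, and let $B \subseteq {}^{X}2$ be a ground-model Baire set with $\mu_{X}(B) = 1$. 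Applying Fubini to the null set ${}^{X}2 \setminus B$ shows $\{y : \mu_{Z}(B_{y}) = 1\}$ is a $\mu_{Y}$-measure-$1$ Baire subset of ${}^{Y}2$; genericity of $F \restrict Y$ puts it in this set, so the reinterpreted slice $B_{F \restrict Y}$ is $\mu_{Z}$-measure $1$ in $V[F \restrict Y]$; genericity of $F \restrict Z$ then gives $F \restrict Z \in B_{F \restrict Y}$, i.e. $F \in B$. Hence $F$ is $\mathbb{B}(X)$-generic.

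The main obstacle is the remaining implication: if $F$ is $\mathbb{B}(X)$-generic, then $F \restrict Z$ is $\mathbb{B}(Z)$-generic over $V[F \restrict Y]$. The difficulty is that a $\mu_{Z}$-measure-$1$ Baire set $C \subseteq {}^{Z}2$ living in $V[F \restrict Y]$ need not have a ground-model code, so it cannot be fed directly into the characterization applied to $F$. To get around this I would represent a $\mathbb{B}(Y)$-name $\dot C$ for such a set by a jointly Baire-measurable set $B \subseteq {}^{X}2 = {}^{Y}2 \times {}^{Z}2$ in $V$ whose slice $B_{y}$ is, for $\mu_{Y}$-almost every $y$, the set forced to be the value of $\dot C$ below $y$ — this is the standard fact that names for reals, equivalently for codes of Baire sets, in the random algebra are represented by measurable functions on ${}^{Y}2$, which I would derive from the c.c.c.\ support analysis together with the reinterpretation facts recorded in the excerpt. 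Since $\dot C$ is forced to have $\mu_{Z}$-measure $1$, the integral form of Fubini gives $\mu_{X}(B) = \int_{{}^{Y}2} \mu_{Z}(B_{y})\, d\mu_{Y} = 1$; applying the characterization to the $\mathbb{B}(X)$-generic $F$ yields $F \in B$, and unwinding the representation gives $F \restrict Z \in C$. Verifying that the measurable-family representation genuinely tracks the value of the name on a measure-one set of conditions is the technical heart of the argument; everything else reduces to bookkeeping with Fubini and the absoluteness of the coded Baire relations.
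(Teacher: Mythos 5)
The paper does not actually prove Theorem \ref{randfact}: it is quoted as a standard fact, with citations to Theorem 552P of Fremlin's volume five and Theorem 3.13 of Kunen's article, so there is no in-paper argument to compare yours against. What you propose is, in substance, the standard (essentially Kunen-style) proof: characterize $\mathbb{B}(X)$-genericity of $F$ by membership in every reinterpreted ground-model Baire set of $\mu_{X}$-measure one (a characterization the paper records), dispose of two directions by Fubini on cylinders, and handle the only serious direction --- that $F \restrict Z$ is generic over $V[F \restrict Y]$ --- by representing a $\mathbb{B}(Y)$-name for a measure-one Baire subset of ${^{Z}}2$ by a measurable family over ${^{Y}}2$ (the analogue of Theorem \ref{namemeas}) and integrating. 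That outline is correct, and you rightly identify the representation step as the technical heart; but two points deserve to be made explicit. First, the set $B = \{ x \in {^{X}}2 \mid x \restrict Z \in eval(g(x \restrict Y))\}$ is not obviously Baire, because the evaluation relation $z \in eval(c)$ is not Borel in the pair $(c,z)$ when the codes range over all ranks; you should use the c.c.c.\ to bound the Borel rank of the named code below a fixed ground-model countable ordinal (a name for a countable ordinal is forced into a countable ground-model set of values), after which the rank-bounded evaluation relation is Borel and $B$ is Baire with countable support, so your Fubini computation $\mu_{X}(B) = \int \mu_{Z}(B_{y})\, d\mu_{Y} = 1$ is legitimate. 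Second, a bijection between generics with $V[F] = V[F \restrict Y][F \restrict Z]$ is, taken verbatim, weaker than forcing equivalence (equal classes of extensions do not in general identify the posets); since your correspondence is given by names in both directions and the two maps are mutually inverse on generics, it does induce inverse complete embeddings of the Boolean completions, hence an isomorphism, but that final step should be said rather than left at ``which is exactly what forcing-equivalence amounts to.'' For contrast, the cited proofs of Fremlin and Kunen run more algebraically: $E \mapsto E^{X}$ is a measure-preserving complete embedding of $\mathbb{B}(Y)$ into $\mathbb{B}(X)$, and one identifies the quotient algebra with $\mathbb{B}(X \setminus Y)$ as computed in the intermediate extension (in Fremlin's setting via Maharam-type considerations); that route avoids the code-rank bookkeeping entirely, while yours has the advantage of delivering directly the statement the paper actually uses downstream (e.g.\ in the first proof of Theorem \ref{univrei}), namely the factorization of the generic function itself.
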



Each intermediate extension $V[H]$ then generates a new class of measures on $P$, those induced by the $\mathbb{B}(X \setminus Y)$-names for elements of $P$. Our first proof uses the measures $\sigma(H, \undertilde{\eta})$, where $\undertilde{\eta}$ is a $\mathbb{B}(X)$-name in $V$ for an element of a Polish space $P$ in $V$ and $H$ is a $V$-generic filter for $\mathbb{B}(Y)$, for some nonempty proper subset $Y$ of $X$. For each Borel set $I \subseteq P$, $\sigma(H, \undertilde{\eta})(I)$ is defined to be $$\mu_{X \setminus Y}(\truth{\undertilde{\eta}/H \in \check{I}}),$$ where, as before, $\truth{\undertilde{\eta}/H \in \check{I}}$ is the condition in $\mathbb{B}(X\setminus Y)$ (as interpreted in $V[H]$) forcing that $\undertilde{\eta}/H \in \check{I}$, such that every
condition incompatible with it forces that $\undertilde{\eta}/H \not\in \check{I}$ (here $\undertilde{\eta}/H$ is a $\mathbb{B}(X \setminus Y)$-name whose realization in the $\mathbb{B}(X \setminus Y)$-extension of $V[H]$ will be the same as the realization of $\undertilde{\eta}$ in the $\mathbb{B}(X)$-extension of $V$ induced by $H$ and the $V[H]$-generic filter for $\mathbb{B}(X \setminus Y)$).

The following fact appears in $\cite{Ku, F5}$. Combining it with Theorem \ref{first}, we have that consistently there exist
just continuum many universally measurable sets of reals.

\begin{thrm}\label{arith} Suppose that $\kappa$ is a cardinal such that $\kappa^{\mathfrak{c}} = \kappa$. Then $2^{\aleph_{0}} = 2^{(\mathfrak{c}^{V})} =
\kappa$ in the $\mathbb{B}(\kappa)$ extension.
\end{thrm}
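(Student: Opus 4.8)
The plan is to bound the number of subsets of $\aleph_0$, and more generally of $\mathfrak{c}^{V}$, in the extension both from above (by counting nice names) and from below (by exhibiting many mutually distinct random reals), using throughout that $\mathbb{B}(\kappa)$ is c.c.c.\ and hence preserves all cardinals and cofinalities. Two pieces of ground-model cardinal arithmetic underlie everything. First, since $\aleph_{0} < \mathfrak{c}$ we have $\kappa^{\aleph_{0}} \leq \kappa^{\mathfrak{c}} = \kappa$, and as $\kappa$ is infinite also $\kappa^{\aleph_{0}} \geq \kappa$, so $\kappa^{\aleph_{0}} = \kappa$. Second, $|\mathbb{B}(\kappa)| = \kappa$: as noted above, every Baire subset of ${}^{\kappa}2$ has countable support and is coded by a countable subset of $\kappa \times \omega$, so there are at most $\kappa^{\aleph_{0}} = \kappa$ Baire sets and hence at most $\kappa$ conditions, while the conditions $[C^{i}_{x}]_{\mu_{\kappa}}$ for $x \in \kappa$ already number $\kappa$.

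For the upper bounds I would use the standard nice-name counting. Because $\mathbb{B}(\kappa)$ is c.c.c., every antichain is countable, so a nice name for a subset of a cardinal $\lambda$ is coded by a $\lambda$-indexed sequence of countable subsets of $\mathbb{B}(\kappa)$. The number of countable subsets of $\mathbb{B}(\kappa)$ is at most $|\mathbb{B}(\kappa)|^{\aleph_{0}} = \kappa^{\aleph_{0}} = \kappa$, so the number of such nice names, computed in $V$, is at most $\kappa^{\lambda}$. Every subset of $\lambda$ in the extension is the realization of some such name, whence $(2^{\lambda})^{V[G]} \leq (\kappa^{\lambda})^{V}$ and, since $\kappa$ is preserved, $(2^{\lambda})^{V[G]} \leq \kappa$ whenever $(\kappa^{\lambda})^{V} = \kappa$. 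Taking $\lambda = \aleph_{0}$ gives $(2^{\aleph_{0}})^{V[G]} \leq \kappa^{\aleph_{0}} = \kappa$, and taking $\lambda = \mathfrak{c}^{V}$ gives $(2^{\mathfrak{c}^{V}})^{V[G]} \leq \kappa^{\mathfrak{c}} = \kappa$, where the final equality is exactly the hypothesis.

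For the lower bound it suffices to produce $\kappa$ distinct reals in $V[G]$. Fix a partition $\kappa = \bigsqcup_{\alpha < \kappa} I_{\alpha}$ into countably infinite blocks and, after fixing bijections $I_{\alpha} \cong \omega$, let $r_{\alpha}$ be the real coded by $F \restrict I_{\alpha}$. For $\alpha \neq \beta$ the set of $f \in {}^{\kappa}2$ for which $F \restrict I_{\alpha}$ and $F \restrict I_{\beta}$ code the same real has support $I_{\alpha} \cup I_{\beta}$ and is $\mu_{\kappa}$-null by Fubini's Theorem (the diagonal of ${}^{I_{\alpha}}2 \times {}^{I_{\beta}}2$ being null); its complement is therefore conull, so it is forced that $r_{\alpha} \neq r_{\beta}$. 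Hence $\{ r_{\alpha} : \alpha < \kappa \}$ are pairwise distinct in $V[G]$ and $(2^{\aleph_{0}})^{V[G]} \geq \kappa$. Combining with the upper bounds, and with $(2^{\mathfrak{c}^{V}})^{V[G]} \geq (2^{\aleph_{0}})^{V[G]} \geq \kappa$ (as $\mathfrak{c}^{V} \geq \aleph_{0}$), yields $2^{\aleph_{0}} = 2^{\mathfrak{c}^{V}} = \kappa$ in $V[G]$.

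The only real subtlety, and the step I would be most careful about, is the counting of nice names: one must ensure that the antichain-sequences are drawn from $V$, so that the relevant count is the ground-model value $\kappa^{\lambda}$ where the hypothesis $\kappa^{\mathfrak{c}} = \kappa$ applies, and that c.c.c.\ is genuinely invoked to keep each antichain countable. Everything else is routine once $|\mathbb{B}(\kappa)| = \kappa$ and $\kappa^{\aleph_{0}} = \kappa$ are established.
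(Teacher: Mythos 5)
Your proof is correct. The paper gives no proof of this theorem---it cites it as a standard fact from Kunen and Fremlin---and your argument (the upper bounds via nice-name counting, resting on $|\mathbb{B}(\kappa)| = \kappa^{\aleph_{0}} = \kappa$ from countable supports and the c.c.c., and the lower bound via $\kappa$ pairwise distinct random reals read off disjoint countable blocks, with the diagonal null by Fubini) is exactly the standard argument those sources give.
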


\subsection{First proof}


\begin{thrm}\label{first} Let $P$ be a Polish space. For any nonempty set $X$, every universally measurable subset of $P$ is the union of $\mathfrak{c}^{V}$ many
Borel sets in the $\mathbb{B}(X)$ extension.
\end{thrm}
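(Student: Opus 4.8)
The plan is to reduce the statement to a local covering property and then to extract the Borel pieces from the measures induced by names for reals, with the cardinality bound as the genuine difficulty.

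First I would reduce to the following: there is a family $\mathcal{F}$ of at most $\mathfrak{c}^{V}$ Borel sets such that every $x \in A$ lies in some $B \in \mathcal{F}$ with $B \subseteq A$. Indeed, given such an $\mathcal{F}$ we have $A = \bigcup\{B \in \mathcal{F} : B \subseteq A\}$, exhibiting $A$ as a union of at most $\mathfrak{c}^{V}$ Borel sets, and conversely any such decomposition furnishes $\mathcal{F}$. Since $P \setminus A$ is universally measurable whenever $A$ is, I would phrase the local property symmetrically, so that the same argument applied to the complement yields the sharper ``if and only if'' of the abstract.

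Second, the engine. Fix $x \in A$. Since $\mathbb{B}(X)$ is c.c.c., $x = \undertilde{\eta}/F$ for some $\mathbb{B}(X)$-name $\undertilde{\eta}$ with countable support, so $x \in V[F \restrict Y]$ for some countable $Y$; factoring $\mathbb{B}(X) \cong \mathbb{B}(Y) * \mathbb{B}(X \setminus Y)$ by Theorem \ref{randfact} and setting $H = F \restrict Y$, the point $x$ is $\tau$-random over the intermediate model $V[H]$ for the induced measure $\tau = \sigma(H, \undertilde{\eta})$. Because $A$ is universally measurable in $V[F]$ and $\tau$ is the reinterpretation of a Borel measure of $V[H]$, $A$ is measurable with respect to the completion of $\tau$, so there is a Borel set $B$ with $\tau(A \bigtriangleup B) = 0$. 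The $\tau$-randomness of $x$ over $V[H]$ then decides membership: once $B$ is coded in $V[H]$, the point $x$ avoids the null set $A \bigtriangleup B$, so $x \in A \iff x \in B$, and intersecting $B$ with a Borel $\tau$-conull subset of $A$ shrinks it to a genuine Borel subset of $A$ containing $x$.

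The hard part will be to arrange two things at once. First, that the approximant $B$ can be taken coded in the intermediate model $V[H]$ rather than in all of $V[F]$: the obstacle is that the canonical approximant may be correlated with $x$ precisely through the coordinates of $Y$ that produce $x$, and decorrelating it requires exploiting the homogeneity of $\mathbb{B}(X)$ together with Fubini and the factorization of Theorem \ref{randfact} to move the witnessing Borel data onto coordinates independent of $\undertilde{\eta}$. Second, and this is the main obstacle, the cardinality bound: a priori each $x$ contributes a Borel set coded in some $V[F \restrict Y]$, and there are $|X|^{\aleph_{0}}$ candidate countable supports $Y$, in general far more than $\mathfrak{c}^{V}$. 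The crux is to show that only $\mathfrak{c}^{V}$ of these Borel sets are needed; I would argue that, up to reinterpretation, each such $B$ is determined by a ground-model measure together with a ground-model Borel code, each ranging over a set of size $\mathfrak{c}^{V}$, so that points of $A$ living over distinct coordinate blocks reuse the same reinterpreted Borel sets. This reflection—that universal measurability, as opposed to measurability for a single measure, forces the Borel data witnessing $x \in A$ down into the ground model—is exactly where the full strength of the hypothesis is used. Granting it, the family $\mathcal{F}$ of reinterpretations of the ground-model approximants has size at most $\mathfrak{c}^{V}$, every point of $A$ lies in a member contained in $A$, and the reduction in the first paragraph completes the proof.
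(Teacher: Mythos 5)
Your framework---points of the extension as realizations $\undertilde{\eta}/F$, the induced measures $\sigma(H, \undertilde{\eta})$, and membership in $A$ decided by randomness over an intermediate model---is the right one, but the two steps you defer to ``the hard part'' are the actual proof, and your proposed resolution of the cardinality problem is false as stated. You claim each approximant $B$ is ``determined by a ground-model measure together with a ground-model Borel code.'' Neither half is right: $\sigma(H, \undertilde{\eta})$ is an object of $V[H]$, not of $V$, and the Borel pieces cannot in general be reinterpretations of ground-model codes. Concretely, if $x$ is the random real given by $F \restrict \omega$, then $A = \{x\}$ is universally measurable in $V[F]$, but no ground-model code $c$ satisfies $x \in eval(c) \subseteq \{x\}$ in $V[F]$: the statement that $eval(c)$ has at most one element is $\Pi^{1}_{1}$ in $c$ and hence absolute, emptiness of $eval(c)$ is absolute, and membership of a fixed old real is absolute, so such a set would already be $\{r\}$ for some $r \in V$, whereas $x \notin V$. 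So the Borel data witnessing universal measurability genuinely lives in intermediate extensions, and the entire difficulty is to bound the number of intermediate codes needed---which your ``reflection'' asserts rather than proves.

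The paper's proof dissolves both of your difficulties with one move that your per-point setup misses: instead of a countable $Y$ depending on $x$, fix in $V$ a single nonempty $Y \subseteq X$ of cardinality at most $\mathfrak{c}$ closed under the approximant operation, i.e., such that for every name (with countable support inside $Y$) for a Borel measure on $P$ there are names $\underTilde{B}$, $\underTilde{N}$ with countable support inside $Y$ which every condition forces to satisfy $\underTilde{A} \bigtriangleup \underTilde{B} \subseteq \underTilde{N}$ with $\underTilde{N}$ null; such a $Y$ exists because each countable $Z \subseteq Y$ contributes only $\mathfrak{c}$ many names for reals and $\mathfrak{c}^{\aleph_{0}} = \mathfrak{c}$. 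Then for \emph{every} name $\undertilde{\eta}$, wherever its support sits, the measure $\sigma(H, \undertilde{\eta})$ lies in $V[H]$, so its approximants $B$, $N$ are coded in $V[H]$, and your ``decorrelation'' problem becomes trivial rather than hard: $\sigma(H, \undertilde{\eta})(N) = 0$ says precisely that the Boolean value $\truth{\undertilde{\eta}/H \in \check{N}}$ is zero, so every condition of $\mathbb{B}(X \setminus Y)$ forces $\undertilde{\eta}/H \not\in \check{N}$, hence $\undertilde{\eta}/H \in \underTilde{A}/H$ if and only if $\undertilde{\eta}/H \in \check{B}$, and $B \setminus N$ is the required Borel piece. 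The cardinality bound is then immediate, since all pieces are realizations of names with countable support inside the one fixed $Y$. Your alternative route---countable supports per point, plus homogeneity to move the witnessing data off the coordinates generating $\undertilde{\eta}$---can be carried out, but it is essentially the paper's second, much longer proof (Theorem \ref{mainthrm}): it requires Lemma \ref{outnull} (randomness only against null sets whose name-support avoids $b \setminus a$), permutations fixing $a \cup d$ as in Lemma \ref{goodmeas}, and the isomorphism-class bookkeeping of Claims 1 and 2, none of which your sketch supplies or replaces.
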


\begin{proof} Let $\underTilde{A}$ be a $\mathbb{B}(X)$-name for a universally measurable set of reals. Then for every $\mathbb{B}(X)$-name $\dot{m}$
for a  Borel measure on $P$ there are $\mathbb{B}(X)$-names $\underTilde{B}$ and $\underTilde{N}$ for Borel subsets of $P$ such that every condition forces
that $\underTilde{A} \bigtriangleup \underTilde{B} \subseteq \underTilde{N}$ and that $\underTilde{N}$ has $\dot{m}$-measure 0. Furthermore, every measure in the extension is the realization of a $\mathbb{B}(X)$-name with countable support, and $\underTilde{B}$ and $\undertilde{N}$ can be taken to have countable support. For any countable $Z \subseteq X$, there is a set of continuum many names giving rise to all the reals in the $\mathbb{B}(Z)$ extension. It follows then that there is a nonempty set $Y \subseteq X$ of cardinality at most $\mathfrak{c}$ such that if $H \subset \mathbb{B}(Y)$ is a $V$-generic filter, then for every Borel measure $\rho$ on $P$ in $V[H]$ there are Borel sets $B$, $N$ such that $\rho(N) = 0$ and every condition in $\mathbb{B}(X \setminus Y)$ forces that $\underTilde{A}/H \bigtriangleup \check{B} \subseteq \check{N}$ (this is somewhat abusive notation : $\check{B}$ and $\check{N}$ refer to the reinterpretations of $B$ and $N$ in the $\mathbb{B}(X \setminus Y)$-extension; strictly speaking we should fix codes $c$ and $d$ for $B$ and $N$ and refer to $eval(\check{c})$ and $eval(\check{d})$). For every $\mathbb{B}(X)$-name $\undertilde{\eta}$ in $V$, the measure $\sigma(H, \undertilde{\eta})$ exists in $V[H]$. Letting $B$, $N$ be the corresponding Borel sets for this measure, it follows that every condition in $\mathbb{B}(X \setminus Y)$ forces that $\undertilde{\eta}/H \not\in \check{N}$, and therefore that
$\undertilde{\eta}/H \in \underTilde{A}$ if and only if $\undertilde{\eta}/H \in \check{B}$. It follows that in the $\mathbb{B}(X)$ extension,
the realization of $\underTilde{A}$ is the union of all the (reinterpreted) Borel sets in the corresponding $\mathbb{B}(Y)$ extension (specifically, sets of the form $B \setminus N$) which are subsets of $\underTilde{A}$. Since each such Borel set is the realization of a name with support a countable subset of $Y$, there are just $\mathfrak{c}^{V}$ many such sets.
\end{proof}

The forcing axiom MA$_{\kappa}$ (Martin's Axiom for $\kappa$-many dense open sets) implies that the union of $\kappa$ many null sets is null, for any Borel measure on a Polish space (see the proof of Theorem 7.3 of \cite{Bl}). From this it follows (again, under MA$_{\kappa}$) that unions of $\kappa$ many Borel sets are universally measurable (since if $B_{\alpha}$ $(\alpha < \kappa)$ are $\mu$-measurable sets, the set
$B_{\alpha} \setminus \bigcup_{\beta < \alpha}B_{\beta}$ must be $\mu$-null for a tail of $\alpha$).
It is tempting then to think that consistently the universally measurable sets are exactly the unions of $\kappa$ many Borel sets, for some cardinal $\kappa$. The following fact rules this out, however.

\begin{thrm}[Grzegorek (see \cite{M84})] If $\lambda$ is the least cardinality of a nonmeasurable set of reals, then there is a universally null set of
cardinality $\lambda$.
\end{thrm}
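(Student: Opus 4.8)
The plan is to first pin down the cardinal $\lambda$ and only then build the set. Write $\mu$ for the standard (Lebesgue) product measure on $2^{\omega}$, and let $\operatorname{non}(\mathcal{N})$ denote the least cardinality of a non-$\mu$-null set. I would begin by showing $\lambda = \operatorname{non}(\mathcal{N})$. On one hand every non-measurable set is non-null (null sets are measurable), so $\lambda \ge \operatorname{non}(\mathcal{N})$. On the other hand, if $\operatorname{non}(\mathcal{N}) < \mathfrak{c}$, a non-null set $A$ of size $\operatorname{non}(\mathcal{N})$ cannot be measurable: a measurable non-null set has positive measure and hence, by inner regularity, contains a perfect set and has size $\mathfrak{c}$; thus $A$ is non-measurable and $\lambda \le \operatorname{non}(\mathcal{N})$, while if $\operatorname{non}(\mathcal{N}) = \mathfrak{c}$ a Vitali or Bernstein set gives $\lambda \le \mathfrak{c} = \operatorname{non}(\mathcal{N})$. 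Moreover, by Theorem~\ref{isom} the value $\operatorname{non}(\mathcal{N})$ is the same for every atomless Borel probability measure, since a Borel bijection carries non-null sets to non-null sets and preserves cardinality. Consequently every set of size strictly below $\lambda$ is \emph{already} universally null, so the entire content of the statement is to realize universal nullity at the critical cardinality $\lambda$ itself.

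Next I would reformulate the goal. Using Theorem~\ref{isom} again, every atomless Borel probability measure $\nu$ on $2^{\omega}$ has the form $\nu(I) = \mu(f^{-1}[I])$ for a Borel automorphism $f$, so a set $X$ is universally null if and only if $g[X]$ is $\mu$-null for every Borel automorphism $g$ of $2^{\omega}$; equivalently, the countably generated, point-separating trace algebra $\{B \cap X : B \text{ Borel}\}$ carries no atomless probability measure vanishing on points. A cautionary observation that shapes the whole proof is that one cannot simply transport a non-null witness $A$ into a universally null set: universal nullity is invariant under Borel bijections, and $A$, being non-null, is not universally null, so every Borel-isomorphic copy of $A$ fails as well. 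The universally null set must therefore be genuinely new, built so that \emph{every} relabeling of it is null.

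The construction I would then carry out starts from a non-null set $A = \{a_{\alpha} : \alpha < \lambda\}$ and produces, by a transference argument, a set $X$ of size $\lambda$ whose every Borel-automorphic image is forced to be $\mu$-null. The mechanism I would aim for is a Fubini/section argument in a product space $2^{\omega} \times 2^{\omega}$: one arranges $X$ so that, after pushing any given atomless measure to the standard product measure via Theorem~\ref{isom}, the relevant sections of the image are subsets of $A$ of size $< \lambda$, hence null by the minimality established in the first step, so that the whole image is null by Fubini. The main obstacle is precisely the quantifier ``for every automorphism'': there are $\mathfrak{c}$ many atomless measures to defeat, and since $\lambda$ may lie strictly below $\mathfrak{c}$, a step-by-step diagonalization over measures is unavailable. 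The crux is therefore to make the single set $X$ defeat all measures \emph{structurally} rather than by enumeration — collapsing every measure to the one standard product measure through the isomorphism theorem and then invoking the minimality of $\lambda$ uniformly — and the delicate point where I expect the real work to concentrate is verifying that this reduction can be performed simultaneously for all Borel automorphisms $g$ at once.
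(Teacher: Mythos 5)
You should know at the outset that the paper contains no proof of this statement to compare against: it is quoted as a theorem of Grzegorek with a pointer to \cite{M84}, so your proposal stands or falls on its own. Its first half is correct and standard: $\lambda = \operatorname{non}(\mathcal{N})$, the invariance of this cardinal under change of atomless Borel probability measure via Theorem \ref{isom}, the consequence that every set of size $< \lambda$ is universally null, and the (genuinely useful) observation that no Borel-isomorphic copy of a non-null witness can work. But this is the routine half. The construction of $X$, which is the entire content of Grzegorek's theorem, is never given: ``produces, by a transference argument, a set $X$'' names no transference, and ``one arranges $X$ so that the relevant sections of the image are subsets of $A$ of size $< \lambda$'' for \emph{every} Borel automorphism $g$ simultaneously is not a mechanism but a restatement of the theorem --- by your own reformulation, universal nullity of $X$ \emph{is} a condition quantified over all $g$. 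You concede this yourself in the final sentence; what you flag as ``the delicate point'' is the whole problem.

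Moreover, the proposed Fubini step would be unsound even if the arranging could be done, for two reasons. First, Fubini's theorem applies to measurable sets, and the sets in play are precisely non-measurable; for outer measure the inference ``all sections null $\Rightarrow$ the set is null'' is false. The classical counterexample is Sierpi\'nski's: under CH, let $\preceq$ wellorder $[0,1]$ in type $\omega_{1}$ and let $W = \{ (x,y) : y \preceq x \}$. Every vertical section of $W$ is countable --- hence null, indeed universally null, and of size $< \lambda$ in your notation, since $\operatorname{non}(\mathcal{N}) = \aleph_{1}$ here --- yet $W$ has full outer measure, because any Borel $B \supseteq W$ has a complement with countable horizontal sections, which is Borel and therefore null. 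So small-section control of $g[X]$ cannot by itself yield $\mu$-nullity. Second, the Borel bijection supplied by Theorem \ref{isom} is a bijection of the plane that in no way respects the coordinate axes, so any section structure you build into $X$ is scrambled by $g$; ``the relevant sections of the image'' are sections of an arbitrary Borel rearrangement, over which you have arranged nothing. The known proof (Grzegorek; see \cite{M84}, or Fremlin's treatment of universally negligible sets) is different in kind: in outline, it transfers a putative atomless measure giving the constructed set positive outer measure, via the subspace (trace) measure, to an atomless measure on a countably generated point-separating $\sigma$-algebra on the index set $\lambda$ itself, and contradicts the minimality of $\lambda$ on the index side; the quantifier over all measures is defeated by this abstract transfer, not by a planar Fubini argument. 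To repair your proposal you would need to supply that (or some comparable) mechanism; as written, the plan cannot be completed along the lines you describe.
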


A universally null set of cardinality $\lambda$ cannot be a union of less than $\lambda$ many Borel sets, since it would then have to contain a perfect set (we thank J\"{o}rg Brendle for pointing this out to us).

However, the argument given here shows the following fact.

\begin{thrm}\label{reverse} Let $P$ be a Polish space and let $X$ be a set of cardinality greater than $\mathfrak{c}$.
In the $\mathbb{B}(X)$ extension, for every set $A \subseteq P$, the following are equivalent:
\begin{enumerate}
\item $A$ is universally measurable,
\item $A$ and its complement are unions of less than $\mathfrak{c}$ many many Borel sets,
\item $A$ and its complement are unions of $\mathfrak{c}^{V}$ many Borel sets.
\end{enumerate}
\end{thrm}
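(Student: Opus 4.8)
The plan is to prove the cycle (1) $\Rightarrow$ (3) $\Rightarrow$ (2) $\Rightarrow$ (1). First I would record the arithmetic fact that $\mathfrak{c}^{V} < \mathfrak{c}$ holds in the $\mathbb{B}(X)$ extension: the $|X|$ coordinates of the generic function yield $|X| > \mathfrak{c}^{V}$ pairwise distinct reals, so $\mathfrak{c} \geq |X| > \mathfrak{c}^{V}$. Given this, (3) $\Rightarrow$ (2) is immediate, since a union of $\mathfrak{c}^{V}$ Borel sets is a union of fewer than $\mathfrak{c}$ Borel sets. For (1) $\Rightarrow$ (3) I would invoke Theorem \ref{first} twice: $A$ is a union of $\mathfrak{c}^{V}$ Borel sets directly, and since universal measurability is preserved under complementation, $P \setminus A$ is universally measurable as well and hence also a union of $\mathfrak{c}^{V}$ Borel sets.

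The substance is (2) $\Rightarrow$ (1). Write $A = \bigcup_{\alpha < \lambda} B_{\alpha}$ and $P \setminus A = \bigcup_{\beta < \lambda'} C_{\beta}$ with each $B_{\alpha}$, $C_{\beta}$ Borel and $\lambda, \lambda' < \mathfrak{c}$. Since universal measurability need only be tested against atomless Borel probability measures, I would fix such a measure $\mu$ and aim to show that $A$ is $\mu$-measurable. Choosing Borel measurable hulls $B \supseteq A$ and $B' \supseteq P \setminus A$ with $\mu(B) = \mu^{*}(A)$ and $\mu(B') = \mu^{*}(P \setminus A)$, one has $\mu_{*}(B \setminus A) = \mu_{*}(B' \cap A) = 0$, and since $B \cup B' = P$ up to a null set, $\mu^{*}(A) + \mu^{*}(P \setminus A) = \mu(B) + \mu(B') = 1 + \mu(B \cap B')$, so that $A$ is $\mu$-measurable exactly when $\mu(B \cap B') = 0$. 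Next I would observe that $B \cap B'$ is covered by fewer than $\mathfrak{c}$ many $\mu$-null Borel sets: as $A \subseteq B$ and $P \setminus A \subseteq B'$, we have $B \cap B' = (A \cap B') \cup ((P \setminus A) \cap B)$, and each $B_{\alpha} \cap B' \subseteq B' \cap A$ is a Borel set of inner measure $0$, hence $\mu$-null, and symmetrically for each $C_{\beta} \cap B$.

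It then remains to show that no positive-measure Borel set can be covered by fewer than $\mathfrak{c}$ many $\mu$-null Borel sets, which I would establish by a fresh-random-real argument in the style of the proof of Theorem \ref{first}. Each of $B \cap B'$, the measure $\mu$, and the covering null sets is coded by a real of countable support; collecting these supports yields a set $W \subseteq X$ with $|W| < \mathfrak{c}$, hence $|W| < |X|$ in the intended setting, so $X \setminus W$ is infinite. Fixing a countable $Z \subseteq X \setminus W$, Theorem \ref{randfact} lets the tail of the forcing add a $\mu_{Z}$-generic function $g \in {^{Z}}2$ over $V[G \restriction W]$; since $Z$ is countable, ${^{Z}}2$ is Polish, so by Theorem \ref{isom} there is a Borel isomorphism carrying $\mu_{Z}$ to $\mu$, and applying it to $g$ produces a real $r$ that is $\mu$-random over $V[G \restriction W]$. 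Such an $r$ lies outside every $\mu$-null Borel set coded in $V[G \restriction W]$, in particular outside all of the covering sets, so $r \notin B \cap B'$; but if $\mu(B \cap B') > 0$ then some condition forces $r \in B \cap B'$, a contradiction. Hence $\mu(B \cap B') = 0$ and $A$ is $\mu$-measurable. The main obstacle is precisely this last step --- equivalently the assertion that $\operatorname{cov}(\mathcal{N}) = \mathfrak{c}$ in the extension --- together with the cardinal bookkeeping needed to guarantee $|W| < |X|$ so that unused coordinates survive to supply the random real; this is automatic in the principal case $|X| = \kappa = \kappa^{\mathfrak{c}}$, where $\mathfrak{c} = |X|$ is regular.
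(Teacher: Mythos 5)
Your outer structure is sound and matches the paper: (1)$\Rightarrow$(3) is Theorem \ref{first} applied to $A$ and to $P \setminus A$, and (3)$\Rightarrow$(2) is the cardinality observation (though note one slip: single coordinates of the generic function are bits, not reals; to get $|X|$ distinct reals you should fix in $V$ a partition of $X$ into $|X|$ countable pieces and use the mutually generic restrictions). Your (2)$\Rightarrow$(1) is in spirit the Fremlin-style proof the paper gives: where you take measurable hulls $B \supseteq A$, $B' \supseteq P \setminus A$ and reduce to $\mu(B \cap B') = 0$, the paper takes maximal countable subunions $E, F$, sets $H = \bigcup_{\alpha \in E}B_{\alpha} \cup \bigcup_{\alpha \in F}C_{\alpha}$, and passes to the auxiliary measure $\mu'(I) = \mu(I \setminus H)$; both routes reduce (2) to a covering-number statement, namely $cov(\mathcal{N}) = \mathfrak{c}$ in the extension. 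Your hull computation itself is correct: each $B_{\alpha} \cap B'$ is a Borel subset of $B' \cap A = B' \setminus (P \setminus A)$, hence null by the hull property of $B'$, and symmetrically for $C_{\beta} \cap B$, so $B \cap B'$ is covered by fewer than $\mathfrak{c}$ many $\mu$-null Borel sets. Your caveat about $|W| < |X|$ is also at parity with the paper, whose own argument fixes ``$\kappa$ a cardinal less than $|X|$''; this is harmless in the intended case $\mathfrak{c} = |X| = \kappa$.

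The genuine gap is in your final step. You show that the transported random real $r$ avoids every covering null set, and then say ``if $\mu(B \cap B') > 0$ then some condition forces $r \in B \cap B'$, a contradiction.'' That is not a contradiction as it stands: the condition in question need not belong to the actual generic filter, so you do not get $r \in B \cap B'$ in $V[G]$; and in an alternative extension whose generic does contain that condition, the covering relation $B \cap B' \subseteq \bigcup_{\alpha}(B_{\alpha} \cap B') \cup \bigcup_{\beta}(C_{\beta} \cap B)$ is no longer available --- it was derived in $V[G]$ from the non-Borel set $A$ (via $A = \bigcup B_{\alpha}$ and $P \setminus A = \bigcup C_{\beta}$), and it is not a statement forced by every tail condition without a further homogeneity argument for the measure algebra. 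Randomness over $V[G \restrict W]$ puts $r$ in $B \cap B'$ only ``with probability $\mu(B \cap B')$,'' not certainly. The repair is easy and keeps you inside the actual $V[G]$: do not transport $\mu_{Z}$ to $\mu$, but to the normalized restriction $\nu(I) = \mu(I \cap B \cap B')/\mu(B \cap B')$, an atomless Borel probability measure coded in $V[G \restrict W]$; by Theorem \ref{isom} there is a measure isomorphism carrying $\mu_{Z}$ to $\nu$, and then $r$ automatically lies in $B \cap B'$ (it avoids the $\nu$-null set $P \setminus (B \cap B')$) while avoiding each covering set (each is $\nu$-null), contradicting the covering in $V[G]$ itself. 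Equivalently, by Theorem \ref{isom} the localized covering number equals $cov(\mathcal{N})$, and your fresh-coordinates argument is complete exactly when the whole space is covered, since then $r \in P$ is automatic; the paper's second proof of (2)$\Rightarrow$(1) faces the same issue and resolves it differently, by a density argument transplanting a $\mathbb{B}(\omega)$-condition into fresh coordinates of an arbitrary tail condition, so that the actual generic filter is guaranteed to catch a witnessing condition.
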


Direction (1) $\Rightarrow$ (3) of Theorem \ref{reverse} is given by Theorem \ref{first}, plus the fact that a subset of
a Polish space is universally measurable if and only if its complement is. Direction (3) $\Rightarrow$ (2) follows from the assumption
that $|X| > \mathfrak{c}^{V}$.

David Fremlin has pointed out to us a relatively simple proof of the remaining direction of Theorem \ref{reverse}. By convention,
$cov(\mathcal{N})$ denotes the smallest cardinality of a collection of Lebesgue null sets whose union covers the real line. By Theorem \ref{isom},
for any finite Borel measure $\mu$ on any Polish space $P$, $cov(\mathcal{N})$ is the same as the smallest cardinality of a collection
of $\mu$-null sets covering $P$. It is standard fact that if $X$ is a set of cardinality greater than $\mathfrak{c}$, then $cov(\mathcal{N}) = \mathfrak{c}$ in the $\mathbb{B}(X)$-extension - this follows for instance from the fact that any $\mathbb{B}(\omega)$-generic real falls outside
of all (reinterpreted) ground model Lebesgue null sets. With these facts in mind, direction (2) $\Rightarrow$ (1) follows from the following argument, which is similar to Exercise 522Yk of \cite{F5}.

\begin{thrm} Suppose that $P$ is a Polish space, $\mu$ is an atomless finite Borel measure on $P$, and $A$ is a subset of $P$ such that
$A$ and $P \setminus A$ are unions of less than $cov(\mathcal{N})$ many $\mu$-measurable sets. Then $A$ is $\mu$-measurable.
\end{thrm}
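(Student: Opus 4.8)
The plan is to reduce the measurability of $A$ to showing that a single measurable set has measure zero, and then to exhibit that set as a union of fewer than $cov(\mathcal{N})$ many $\mu$-null sets, which cannot happen inside a set of positive measure. Write $A = \bigcup_{\alpha < \kappa} A_{\alpha}$ and $P \setminus A = \bigcup_{\beta < \lambda} B_{\beta}$ with each $A_{\alpha}$, $B_{\beta}$ being $\mu$-measurable and $\kappa, \lambda < cov(\mathcal{N})$. Since $\mu$ is a finite Borel measure on a Polish space, I would fix Borel measurable envelopes: a Borel set $H \supseteq A$ with $\mu(H) = \mu^{*}(A)$ and a Borel set $H' \supseteq P \setminus A$ with $\mu(H') = \mu^{*}(P \setminus A)$, where $\mu^{*}$ denotes outer measure. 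The defining property of an envelope is that every measurable subset of $H \setminus A$ is null, and likewise every measurable subset of $H' \setminus (P \setminus A) = H' \cap A$ is null. Because $A$ is measurable if and only if $\mu^{*}(A) + \mu^{*}(P \setminus A) = \mu(P)$ for a finite measure, and since $A \subseteq H$ and $P \setminus A \subseteq H'$ give $H \cup H' = P$, the identity $\mu(H) + \mu(H') = \mu(P) + \mu(H \cap H')$ shows that it suffices to prove $\mu(H \cap H') = 0$.

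Next I would decompose $H \cap H'$ using the two unions. Each $A_{\alpha} \cap H'$ is measurable and contained in $H' \cap A$, hence null by the envelope property; so $H' \cap A = \bigcup_{\alpha} (A_{\alpha} \cap H')$ is a union of $\kappa < cov(\mathcal{N})$ null sets. Symmetrically, each $B_{\beta} \cap H$ is measurable and contained in $H \setminus A$, hence null, so $H \cap (P \setminus A) = \bigcup_{\beta} (B_{\beta} \cap H)$ is a union of $\lambda < cov(\mathcal{N})$ null sets. Since every point of $H \cap H'$ lies either in $A$ or in $P \setminus A$, we have $H \cap H' \subseteq (H' \cap A) \cup (H \cap (P \setminus A))$, so the measurable set $H \cap H'$ is covered by $\kappa + \lambda < cov(\mathcal{N})$ many $\mu$-null sets (the sum of two cardinals each below $cov(\mathcal{N})$ is their maximum, so no regularity assumption is needed).

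Finally, suppose toward a contradiction that $\mu(H \cap H') > 0$, and set $G = H \cap H'$. Restricting $\mu$ to the Borel set $G$ yields a finite atomless Borel measure on a standard Borel space, which carries a Polish topology with the same Borel sets; by Theorem \ref{isom} (as noted in the remark preceding this statement), the covering number for $\mu$ restricted to $G$ equals $cov(\mathcal{N})$. But the previous paragraph exhibits $G$ as a union of fewer than $cov(\mathcal{N})$ sets, each of which is null for $\mu$ restricted to $G$, a contradiction. Hence $\mu(H \cap H') = 0$, and therefore $\mu^{*}(A) + \mu^{*}(P \setminus A) = \mu(P)$, so $A$ is $\mu$-measurable.

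The two points I expect to require care are the existence of Borel envelopes attaining the outer measure, which rests on the fact that a finite Borel measure on a Polish space (equivalently, its completion) admits measurable hulls, and the localization of $cov(\mathcal{N})$ to the subspace $G$, which is exactly the content of the remark that $cov(\mathcal{N})$ is independent of the choice of atomless finite Borel measure on a Polish space. The genuine obstacle, and the organizing idea of the argument, is to route the ``fewer than $cov(\mathcal{N})$ null sets'' bound through the single measurable set $H \cap H'$ rather than attempting to apply it to $A$ directly, using the envelope property to convert each piece $A_{\alpha} \cap H'$ and $B_{\beta} \cap H$ into a null set.
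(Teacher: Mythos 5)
Your proof is correct, and it ends the same way the paper's does---exhibiting a positive-measure set covered by fewer than $cov(\mathcal{N})$ many null sets of an auxiliary atomless finite Borel measure, contradicting the transfer of $cov(\mathcal{N})$ to arbitrary such measures via Theorem \ref{isom}---but the device producing the null decomposition is genuinely different. The paper never mentions outer measure or hulls: it chooses countable $E, F \subseteq \kappa$ maximizing $\mu(\bigcup_{\alpha \in E} B_{\alpha})$ and $\mu(\bigcup_{\alpha \in F} C_{\alpha})$ among all countable subunions, sets $H$ to be the union of these two sets, and observes that maximality makes each whole piece $B_{\alpha} \cup C_{\alpha}$ null for the measure $\mu'(I) = \mu(I \setminus H)$; since these pieces cover $P$, the contradiction is reached on the Polish space $P$ itself, with no change of topology (and the case $\mu(H) = \mu(P)$ gives measurability directly, since then $A \bigtriangleup \bigcup_{\alpha \in E} B_{\alpha}$ lies in the null set $P \setminus H$). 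You instead take measurable hulls $H \supseteq A$ and $H' \supseteq P \setminus A$, reduce measurability to $\mu(H \cap H') = 0$ via the Carath\'{e}odory criterion for finite measures, and use the hull property to make the traces $A_{\alpha} \cap H'$ and $B_{\beta} \cap H$ null. What your route buys is transparency: the obstruction to measurability is isolated as the single set $H \cap H'$, and it is visible exactly which null sets cover it. What it costs is two extra standard facts---the existence of hulls attaining the outer measure, and the re-topologization of the Borel set $G = H \cap H'$ as a Polish space in your final step. The latter is avoidable, and would bring you still closer to the paper, if you instead define $\mu'(I) = \mu(I \cap G)$ for Borel $I \subseteq P$ and cover $P$ by $P \setminus G$ together with your null traces, applying Theorem \ref{isom} on $P$ directly. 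Your observation that $\kappa + \lambda < cov(\mathcal{N})$ requires no regularity assumption is correct; the paper sidesteps the point by using a single index set for both families.
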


\begin{proof} Fix $\kappa < cov(\mathcal{N})$ and $\mu$-measurable sets $B_{\alpha}$, $C_{\alpha}$ $(\alpha < \kappa)$ such that $A = \bigcup_{\alpha < \kappa} B_{\alpha}$ and $P \setminus A = \bigcup_{\alpha < \kappa} C_{\alpha}$. There exist countable subsets of $\kappa$, $E$ and $F$ such that
for all countable $G \subseteq \kappa$, $$\mu(\bigcup_{\alpha \in G}B_{\alpha}) \leq \mu(\bigcup_{\alpha \in E}B_{\alpha})$$ and
$$\mu(\bigcup_{\alpha \in G}C_{\alpha}) \leq \mu(\bigcup_{\alpha \in F}C_{\alpha}).$$ Let $H = \bigcup_{\alpha \in E}B_{\alpha} \cup \bigcup_{\alpha \in F}C_{\alpha}$ If $\mu(H) = \mu(P)$, then $A$ is $\mu$-measurable, and we are done. Otherwise, let $\mu'$ be the finite Borel measure on $P$ defined by letting $\mu'(I) = \mu(I \setminus H)$ for all Borel $I \subseteq P$. We have that for all $\alpha \in \kappa$, $\mu(B_{\alpha} \cup C_{\alpha} \cup H) = \mu(H)$, which implies that $\mu'(B_{\alpha} \cup C_{\alpha}) = 0$. Thus $P$ is a union of $\kappa$ many $\mu'$-null sets, giving a contradiction.
\end{proof}

In the rest of this section, we give our original proof of direction (2) $\Rightarrow$ (1) of Theorem \ref{reverse}.
This proof uses the following definition.

\begin{df} Suppose that $A$ is a subset of a Polish space $P$, $Q$ is a partial order,
and $G \subset Q$ is a $V$-generic filter.
The \emph{Borel reinterpretation} of $A$ in $V[G]$ is the union of all the reinterpreted ground model
Borel sets contained in $A$.
\end{df}

A version of the following fact is mentioned in \cite{FMW}.

\begin{lem}\label{fmwfact} Let $P$ be a Polish space, and let $A$ be a subset of $P$. Then the following
are equivalent.
\begin{enumerate}
\item $A$ is universally measurable.
\item The Borel reinterpretations of $A$ and $P \setminus A$ are complements in every $\mathbb{B}(\omega)$-extension.
\item For every infinite set $X$, the Borel reinterpretations of
$A$ and $P \setminus A$ are complements in every $\mathbb{B}(X)$-extension.
\end{enumerate}
\end{lem}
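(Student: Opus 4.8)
The plan is to establish the three implications $(1)\Rightarrow(3)$, $(3)\Rightarrow(2)$, and $(2)\Rightarrow(1)$, exploiting the characterization of generic reals as precisely those functions lying in every reinterpreted measure-one Baire set, together with the correspondence between names for reals and Borel probability measures established in Section \ref{randmeas}. Note that $(3)\Rightarrow(2)$ is immediate, since $\omega$ is an infinite set. The heart of the argument is the interplay between a universally measurable set $A$ and the measures $\sigma$ induced by $\mathbb{B}(X)$-names for elements of $P$.

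For $(1)\Rightarrow(3)$, fix an infinite $X$ and a $\mathbb{B}(X)$-generic filter $G$ over $V$. I would show that in $V[G]$ every point of $P$ lies in the Borel reinterpretation of exactly one of $A$ and $P\setminus A$; since these reinterpretations are disjoint (a ground model Borel set contained in $A$ is disjoint from one contained in $P\setminus A$, and disjointness of Borel sets is absolute by the remarks on codes), it suffices to show every point is covered. A point of $P$ in $V[G]$ is the realization $\undertilde{\eta}/G$ of some $\mathbb{B}(X)$-name $\undertilde{\eta}$; by c.c.c.\ this name has countable support, and it induces a Borel probability measure $\sigma$ on $P$ in $V$ with $\sigma(I)=\mu_{X}(\truth{\undertilde{\eta}\in\check{I}})$ for Borel $I$. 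Since $A$ is universally measurable, there are ground model Borel sets $B,N$ with $A\bigtriangleup B\subseteq N$ and $\sigma(N)=0$. The crucial point is that $\sigma(N)=0$ forces $\undertilde{\eta}/G\notin N$ (the generic real avoids every reinterpreted $\sigma$-null set, by the characterization of generic reals): because $N$ has $\sigma$-measure zero, the condition $\truth{\undertilde{\eta}\in\check{N}}$ is trivial, so no condition forces $\undertilde{\eta}\in\check{N}$. Hence in $V[G]$ we have $\undertilde{\eta}/G\in A\Leftrightarrow\undertilde{\eta}/G\in B$, placing the point inside either the reinterpretation of $B\setminus N\subseteq A$ or that of its Borel complement inside $P\setminus A$.

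For $(2)\Rightarrow(1)$, I argue contrapositively: suppose $A$ is not universally measurable, so there is an atomless Borel probability measure $\mu$ on $P$ (in $V$) for which $A$ is not $\mu$-measurable. By Theorem \ref{isom} I may assume $\mu$ is the standard measure transported to $P$, equivalently work with $\mu_{\omega}$ on $\creals$. Non-measurability of $A$ with respect to the completion means that the inner and outer $\mu$-measures of $A$ differ, so there is a positive-measure Baire set $B$ such that both $A\cap B$ and $(P\setminus A)\cap B$ meet every positive-measure Baire subset of $B$. I would then pass to the $\mathbb{B}(\omega)$-extension by a generic real $r$ distributed according to $\mu$, and show that $r$ lies in the Borel reinterpretation of neither $A$ nor $P\setminus A$: if a ground model Borel set $D\subseteq A$ had positive $\mu$-measure it would, by genericity, capture $r$ on a positive-measure condition, but the ``non-measurability on every positive-measure piece'' forces each such $D$ to be $\mu$-null, whence $r\notin D$ since generic reals avoid reinterpreted null sets. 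The same applies to $P\setminus A$, so the two Borel reinterpretations fail to cover $r$ and are not complements, contradicting (2).

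The step I expect to be the main obstacle is the measure-theoretic core of $(2)\Rightarrow(1)$: extracting from the bare failure of universal measurability a single positive-measure Baire set on which $A$ is ``everywhere non-measurable,'' and then correctly translating ``$r$ is $\mathbb{B}(\omega)$-generic for $\mu$'' into the statement that $r$ avoids exactly the reinterpreted $\mu$-null Borel sets. The delicacy lies in matching the forcing-theoretic notion of genericity (membership in every reinterpreted measure-one set) against the classical measure-theoretic dichotomy, and in handling the reinterpretation of codes carefully so that ``$D$ is $\mu$-null in $V$'' transfers to ``$r\notin\check{D}$ in $V[r]$.'' The forward direction $(1)\Rightarrow(3)$ is comparatively routine once the name-to-measure correspondence and the generic-real characterization are in hand.
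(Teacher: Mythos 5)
Your directions (3)$\Rightarrow$(2) and (1)$\Rightarrow$(3) are essentially identical to the paper's: the paper also takes a name $\undertilde{\eta}$ for an arbitrary new point, forms the induced measure $\sigma(I) = \mu_{X}(\truth{\undertilde{\eta} \in \check{I}})$, chooses ground model Borel sets $B, N$ with $A \bigtriangleup B \subseteq N$ and $\sigma(N) = 0$, and concludes from the fact that $\truth{\undertilde{\eta} \in \check{N}}$ is the zero condition that the point lies in the reinterpretation of $B \setminus N \subseteq A$ or of $(P \setminus B) \setminus N \subseteq P \setminus A$. (Your phrase ``$\undertilde{\eta}/G \in A \Leftrightarrow \undertilde{\eta}/G \in B$'' is not literally meaningful for a point not in $V$, but the conclusion you actually draw is the correct one.)

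For (2)$\Rightarrow$(1) you take a genuinely different route. The paper argues directly: given an atomless probability measure $\nu$, it transports $\nu$ to $\mu_{\omega}$ via Theorem \ref{isom}, observes that (2) gives a \emph{dense} set of conditions forcing $\pi(F)$ into a ground model Borel set contained in $A$ or in $P \setminus A$, and uses the c.c.c.\ to extract Borel sets $B_{0}, B_{1}$ with $\pi[B_{0}] \subseteq A$, $\pi[B_{1}] \subseteq P \setminus A$ and $\mu_{\omega}(B_{0} \cup B_{1}) = 1$, which witness $\nu$-measurability. Your contrapositive via the hull-minus-kernel set $B$ trades the density/antichain step for the classical ``completely nonmeasurable core'' lemma, and has the virtue of exhibiting a concrete point of the extension escaping both reinterpretations. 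However, one step as literally written is false: from ``every positive-measure subset of $B$ meets both $A$ and $P \setminus A$'' you conclude that \emph{every} ground model Borel $D \subseteq A$ is $\mu$-null. Globally this fails: take $A = [0,1] \cup Z$ with $Z$ a nonmeasurable subset of $[2,3]$; then $D = [0,1] \subseteq A$ has positive measure, some condition forces the generic into $\check{D}$, and below such a condition no contradiction arises. What your property of $B$ actually gives is only $\mu(D \cap B) = 0$. The repair is to localize: force below the condition determined by $B$ (equivalently, replace $\mu$ by the normalized restriction $\mu_{B}(\,\cdot\,) = \mu(\,\cdot\, \cap B)/\mu(B)$ before applying Theorem \ref{isom}; note $\mu_{B}$ is still atomless, and every ground model Borel subset of $A$ or of $P \setminus A$ is $\mu_{B}$-null). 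Then the generic point lies in the reinterpretation of $B$ and avoids every reinterpreted $\mu_{B}$-null set, hence lies in neither Borel reinterpretation, contradicting (2) in that one extension, which is all that (2) requires. With that one-line fix your argument is complete and correct.
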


\begin{proof} The implication (3) $\Rightarrow$ (2) is immediate. For (1) $\Rightarrow$ (3), fix an infinite set $X$ and let  $G \subset \mathbb{B}(X)$ be a $V$-generic filter. The Borel reinterpretations of $A$ and $P \setminus A$ are clearly disjoint, so it suffices
to see that each $x \in (P)^{V[G]}$ is in one set or the other. Each such $x$ is the realization of a
$\mathbb{B}(X)$-name $\undertilde{\eta}$, and there is a corresponding measure $\sigma$ on $P$ defined by letting $\sigma(I)$
(for each Borel set $I \subseteq P$) be the $\mu_{X}$-measure of the condition in $\mathbb{B}(X)$ asserting that the realization of $\undertilde{\eta}$ is in $I$. Then there is a Borel set $B \subseteq P$ such that $A \bigtriangleup B$ is contained in a
$\sigma$-null Borel set $N$. Then $x \not\in N$, so $x$ is in the reinterpretation of either $B \setminus N$ or $(P \setminus B) \setminus N$.
Finally, $B \setminus N \subseteq A$ and $(P \setminus B) \setminus N \subseteq P \setminus A$.

For (2) $\Rightarrow$ (1), let $\nu$ be an atomless probability measure on $P$. Applying Theorem \ref{isom}, let $\pi \colon {^{\omega}}2 \to P$ be a Borel isomorphism such that $\nu(\pi[I]) = \mu_{\omega}(I)$ for every Borel $I \subseteq {^{\omega}}2$, where $\mu_{\omega}$ is as defined at the beginning of Section \ref{randmeas}. Letting $F$ denote the $\mathbb{B}(\omega)$-generic function, we have a dense set of conditions in $\mathbb{B}(\omega)$ forcing that $\pi(F)$ will be contained in a ground model Borel set which is either contained in $A$ or its complement. Since $\mathbb{B}(\omega)$ is c.c.c., there exist Borel sets $B_{0}$ and $B_{1}$ such that $\pi[B_{0}] \subseteq A$, $\pi[B_{1}] \subseteq (P \setminus A)$ and $\mu_{\omega}(B_{0} \cup B_{1}) = 1$. Then $\pi[B_{0}]$ and $\pi[P \setminus (B_{0} \cup B_{1})]$ witness universal measurability for $A$ and $\nu$.
\end{proof}

The fact appears in \cite{F5}, with a different proof.

\begin{thrm}\label{univrei} Suppose that $A$ is a universally measurable subset of a Polish space $P$, $X$ is a nonempty set and
$F \colon X \to 2$ is a $V$-generic function for $\mathbb{B}(X)$. Then the Borel reinterpretation of $A$ in $V[F]$ is universally
measurable.
\end{thrm}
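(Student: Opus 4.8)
The plan is to reduce everything to Lemma \ref{fmwfact} and to exploit the fact that an iteration of two random algebras is again a random algebra. Write $A^{*}$ for the Borel reinterpretation of $A$ in $V[F]$, the set we must show is universally measurable there. By Lemma \ref{fmwfact} it suffices to verify, for an arbitrary infinite set $Y$ and an arbitrary $V[F]$-generic filter $G$ for $\mathbb{B}(Y)$, that the Borel reinterpretations of $A^{*}$ and of $P\setminus A^{*}$ (now formed from the $V[F]$-Borel sets) are complements in $V[F][G]$. Since $\mathbb{B}(Y)$ depends only on $|Y|$ and the reinterpretation notions transfer across a relabeling, I may assume $Y\cap X=\emptyset$.

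Two preliminary observations set up the argument. First, applying Lemma \ref{fmwfact} to $A$ in $V$ with the set $X$, the hypothesis that $A$ is universally measurable gives that in $V[F]$ the reinterpretation of $P\setminus A$ is exactly $P\setminus A^{*}$; in other words $P\setminus A^{*}$ is itself the Borel reinterpretation of $P\setminus A$. Second, by Theorem \ref{randfact} (taking $Z=X\sqcup Y$ and $X$ as a nonempty proper subset of $Z$) the two-step extension $V[F][G]$ is, up to forcing equivalence, a single $\mathbb{B}(Z)$-extension of $V$. Hence Lemma \ref{fmwfact} applied to $A$ in $V$ with the infinite set $Z$ tells us that the reinterpretations of $A$ and of $P\setminus A$ formed from $V$-Borel sets are complements in $V[F][G]$.

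The heart of the matter is to transfer this complementarity from the $V$-level reinterpretations to the $V[F]$-level ones; write $R_{V}(\cdot)$ and $R_{V[F]}(\cdot)$ for the Borel reinterpretations in $V[F][G]$ computed from $V$- and $V[F]$-Borel sets respectively. Disjointness is easy: a $V[F]$-Borel set contained in $A^{*}$ and a $V[F]$-Borel set contained in $P\setminus A^{*}$ are disjoint in $V[F]$, and disjointness of Borel sets is preserved on passing to $V[F][G]$ (the intersection is a Borel set coded to be empty, which persists by the absoluteness noted in the Remark), so $R_{V[F]}(A^{*})$ and $R_{V[F]}(P\setminus A^{*})$ are disjoint. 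For covering I use the inclusions $R_{V}(A)\subseteq R_{V[F]}(A^{*})$ and $R_{V}(P\setminus A)\subseteq R_{V[F]}(P\setminus A^{*})$ in $V[F][G]$. These hold because any $V$-Borel set $B\subseteq A$ reinterprets in $V[F]$ to a $V[F]$-Borel subset of $A^{*}$ (by the definition of $A^{*}$), and by the coherence of reinterpretation through successive extensions its reinterpretation in $V[F][G]$ lies inside $R_{V[F]}(A^{*})$; the second inclusion is identical using that $P\setminus A^{*}$ is the reinterpretation of $P\setminus A$. Combining the inclusions with the previous paragraph, the whole of $P$ in $V[F][G]$ is covered by $R_{V[F]}(A^{*})\cup R_{V[F]}(P\setminus A^{*})$, and with disjointness these are complements, as required.

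The step I expect to be the main obstacle is the covering direction, precisely because $A^{*}$ is generally not Borel, so one cannot argue about $A^{*}$ directly; the device of routing through the single large $\mathbb{B}(Z)$-extension, where the ground model set $A$ itself can be fed into Lemma \ref{fmwfact}, is what makes the covering work. The only remaining points needing care are the coherence of Borel reinterpretation under iterated extensions and the identification of $\mathbb{B}(X)*\mathbb{B}(Y)$ with $\mathbb{B}(X\sqcup Y)$, both of which are supplied by the Remark and Theorem \ref{randfact}.
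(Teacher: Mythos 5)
Your proposal is correct and is essentially the paper's own first proof of Theorem \ref{univrei}: the paper argues the contrapositive, using Lemma \ref{fmwfact} to pass from a failure of universal measurability of $A^{*}$ in $V[F]$ to a $\mathbb{B}(\omega)$-extension of $V[F]$ where the $V[F]$-level reinterpretations fail to be complements, and then Theorem \ref{randfact} to view this as a single $\mathbb{B}((X\times\{0\})\cup(\omega\times\{1\}))$-extension of $V$, contradicting Lemma \ref{fmwfact} for $A$. You run the same two ingredients in the direct (non-contradiction) direction and usefully make explicit the inclusions $R_{V}(A)\subseteq R_{V[F]}(A^{*})$ and $R_{V}(P\setminus A)\subseteq R_{V[F]}(P\setminus A^{*})$ and the coherence of code evaluation across successive extensions, which the paper's brief proof leaves implicit.
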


\begin{proof}[First proof of Theorem \ref{univrei}.] Let $A^{*}$ denote the Borel reinterpretation of $A$ in $V[F]$. If $A^{*}$ is not universally measurable in $V[F]$, then
there is a generic extension of $V[F]$ by $\mathbb{B}(\omega)$ (as understood in $V[F]$) in which the Borel reinterpretations of $A^{*}$ and its complement (using Borel sets from $V[F]$) are not complements. By Theorem \ref{randfact}, this extension is a generic extension of $V$ by the forcing $\mathbb{B}((X \times \{0\}) \cup (\omega \times \{1\}))$ in which the Borel reinterpretations of $A$ and its complement (using Borel sets from $V$)
are not complements. This contradicts the universal measurability of $A$ in $V$, by Lemma \ref{fmwfact}.
\end{proof}

We give a second proof which uses material from Section \ref{moresec}.

\begin{proof}[Second proof of Theorem \ref{univrei}.]  By Theorem \ref{isom}, it suffices to prove the theorem in the case where $P$ is $\creals$. A measure on $\creals$ in $V[F]$ is coded by an element of $\creals$,
which in turn is realized by a Borel function $f \colon {^{Y}}2 \to \creals$ for some countable $Y \subseteq X$,
as in Theorem \ref{namemeas} below. Letting $m_{x}$ denote the measure coded by $x \in \creals$, $f$ gives rise to a measure $\upsilon$ on $\creals$: $\upsilon(E) = \int m_{f(y)}(E)\, d\pi$, where $\pi$ is the product measure on ${^{Y}}2$. Since $A$ is universally measurable, there is a
Borel set $B \subseteq \creals$ such that $A \bigtriangleup B$ is contained in an $\upsilon$-null set $N$. Then, letting $F$ be the generic function, $F \restrict Y$ is in the set of $y \in {^{Y}}2$ such that $N$ is $m_{f(y)}$-null, so the reinterpretation of $N$ is $m_{f(F\restrict Y)}$-null. Since $B \setminus N$ is contained in the Borel reinterpretation of
$A$, and $(\creals \setminus B) \setminus N$ is contained in the Borel reinterpretation of $(\creals \setminus A)$, the symmetric difference of the Borel reinterpretation of $A$ with the reinterpretation of $B$ is contained in the reinterpretation of $N$.
\end{proof}

\begin{proof}[Proof of direction (2) $\Rightarrow$ (1) of Theorem \ref{reverse}.] Let $F$ be a $\mathbb{B}(X)$-generic function, and let $\kappa$
be a cardinal less than $|X|$. Suppose that $A$ is a set of reals in $V[F]$,
and that $\bar{B} = \{ B_{\alpha} : \alpha < \kappa\}$ and $\bar{C} = \{ C_{\alpha} : \alpha < \kappa \}$ are collections of Borel sets in $V[F]$ such that $A = \bigcup_{\alpha < \kappa}B_{\alpha}$ and $P \setminus A = \bigcup_{\alpha < \kappa} C_{\alpha}$. Then there is a set $Y \subset X$ such that $X \setminus Y$ is uncountable, and such that $\bar{B}$ and $\bar{C}$ (that is, the corresponding sets of codes) are in $V[F \restrict Y]$. By Theorem \ref{univrei}, it suffices to see that $\bigcup \bar{B}$ is universally measurable in $V[F \restrict Y]$.

Suppose towards a contradiction that that $\bigcup \bar{B}$ is not universally measurable in $V[F \restrict Y]$.
Then there is a condition $p$ in $\mathbb{B}(\omega)$ forcing that some new member of $P$ will not be in either $\bigcup \bar{B}$ or $\bigcup \bar{C}$. Since $X \setminus Y$ is uncountable, there are densely many such conditions in $\mathbb{B}(X \setminus Y)$. To see this, let $D \subseteq {^{\omega}}2$ be a Borel set in $V[F \restrict Y]$ such that $p = [D]_{\mu_{\omega}}$ and let $E \subseteq {^{X \setminus Y}}2$ be a non-$\mu_{X \setminus Y}$-null Baire set in $V[F \restrict Y]$. Then $E$ has countable support, so we may fix an injection $i \colon \omega \to X \setminus Y$ such that the range of $i$ is disjoint from the support of $E$. Let $E'$ be the set of $f \in E$ such that for some $h \in D$, $f(i(n)) = h(n)$ for all $n \in \omega$. Then $[E']_{\mu_{X \setminus Y}}$ forces that $F^{*} \circ i$ (where $F^{*}$ is the $\mathbb{B}(X \setminus Y)$-generic function) will be $V[F \restrict Y]$-generic for the restriction of the partial order $\mathbb{B}(\omega)$ below $p$, and therefore that $\bigcup \bar{B}$ and $\bigcup \bar{C}$ will not be complements. Since $\bigcup \bar{B}$ and $\bigcup \bar{C}$ are complements in $V[F]$, we have the desired contradiction.
\end{proof}

\subsection{More on the random algebra and measures}\label{moresec}

Our original solution to Mauldin's question requires more background information on the random algebra.
Again, we fix a nonempty set $X$.

\begin{df}\label{updown} Given a Baire set $B \subseteq {^{X}}2$ and a nonempty set $Y \subset X$, $B_{Y}$ denotes the set of $y \in {^{Y}}2$ such that
$B_{y} \not\in \mathcal{N}_{X \setminus Y}$. Given a Baire set $E \subseteq {^{Y}}2$, $E^{X}$ is the set of $x \in {^{X}}2$ such that $x \restrict Y \in E$.
\end{df}

\begin{remark}\label{suppind} If $B$ is a Baire subset of ${^{X}}2$ and $B$ has support $Y \subseteq X$, then $\mu_{X}(B) = \mu_{Y}(B_{Y})$.
\end{remark}

\begin{notation} By Fubini's Theorem, if $B$, $B'$ are Baire subsets of ${^{X}}2$, $Y$ is a nonempty proper subset of $X$ and $B \bigtriangleup B' \in \mathcal{N}_{X}$, then $B_{Y}\bigtriangleup B'_{Y} \in \mathcal{N}_{Y}$, so we can let $([B]_{\mu_{X}})_{Y}$ denote $[B_{Y}]_{\mu_{Y}}$. Similarly, if $E_{0}$, $E_{1}$ are Baire subsets of ${^{Y}}2$ such that $E_{0} \bigtriangleup E_{1} \in \mathcal{N}_{Y}$, $E_{0}^{X} \bigtriangleup E_{1}^{X}\in \mathcal{N}_{X}$, so
we can let $[E_{0}]_{\mu_{Y}}^{X}$ denote $[E_{0}^{X}]_{\mu_{X}}$.
\end{notation}

\begin{remark}\label{filtfact} As mentioned above, if $G \subset \mathbb{B}(X)$ is a $V$-generic filter, $F$ is the corresponding generic function, and $B \subseteq {^{X}}2$ is a Baire set in $V$, then $F$ is an element of $B$ (reinterpreted in the extension) if and only if $[B]_{\mu_{X}} \in G$. This can be proved by induction on the rank of $B$ (i.e., the number of steps needed to generate $B$ from sets of the form $C^{i}_{x}$).

Furthermore, if $Z \subset Y$ are nonempty subsets of $X$, and $B \subseteq {^{X}}2$ has support $Y$, then, by Fubini's theorem (applied twice), the set of $x \in B$ such that $(B_{Y})_{x \restrict Z} \in \mathcal{N}_{Y \setminus Z}$ has measure 0 (to see this, first note that
$(B_{Y})_{z} \in \mathcal{N}_{Y \setminus Z}$ if and only if $B_{z} \in \mathcal{N}_{X \setminus Z}$).
It follows that if $[B]_{\mu_{X}} \in G$ then
$(B_{Y})_{F \restrict Z} \not\in \mathcal{N}_{Y \setminus Z}$
in $V[G]$, again using the reinterpretation of $B$.
\end{remark}



Given a Polish space $P$ in the ground model, every $\mathbb{B}(X)$-name for an element of $P$ is represented by a Borel function in the following way (see Definition 551C of \cite{F5}, or Theorems 3.1.5 and 3.1.7 of \cite{BaJu}, and their proofs).

\begin{thrm}\label{namemeas} Suppose that $X$ is a nonempty set, $P$ is a Polish space and $\undertilde{\eta}$ is a $\mathbb{B}(X)$-name for an element of $P$. Suppose that $Y \subseteq X$ is countable and $\undertilde{\eta}$ has support $Y$. Then there exists a Borel function $f \colon {^{Y}}2 \to P$ such that $[{^{X}}2]_{\mu_{X}}$ forces in $\mathbb{B}(X)$ that $f(F \restrict Y) = \undertilde{\eta}_{G}$, where $G \subset \mathbb{B}(X)$ is the generic filter, $F \colon X \to 2$ is the associated generic function and $f$ is identified with its reinterpretation in the generic extension.
\end{thrm}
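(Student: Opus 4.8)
The plan is to reduce to the case $X = Y$ and then represent the name as a pointwise limit of countably-valued Borel functions. First I would invoke Theorem~\ref{randfact} together with the hypothesis that $\undertilde{\eta}$ has support $Y$: writing $\mathbb{B}(X)$ as $\mathbb{B}(Y) * \mathbb{B}(X \setminus Y)$, the realization of $\undertilde{\eta}$ depends only on the $\mathbb{B}(Y)$-part of the generic, so it suffices to build a Borel $f \colon {^{Y}}2 \to P$ with $[{^{Y}}2]_{\mu_{Y}} \forces f(F) = \undertilde{\eta}_{G}$ for the $\mathbb{B}(Y)$-generic function $F$; the displayed statement for $\mathbb{B}(X)$ then follows by pulling back along $F \restrict Y$. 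I would fix a complete compatible metric $d \le 1$ on $P$ and a countable base $\{ U_{n} : n \in \omega \}$, and for each $n$ let $A_{n} \subseteq {^{Y}}2$ be a fixed Baire representative of the condition $\truth{\undertilde{\eta} \in \check{U}_{n}}$. Since $Y$ is countable these $A_{n}$ are Borel, and by Remark~\ref{filtfact} the equivalence $F \in A_{n} \iff \undertilde{\eta}_{G} \in U_{n}$ holds for the generic $F$.

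Next I would isolate a measure-one Borel set on which the construction runs. For each level $k$, the fact that $\undertilde{\eta}$ names a point and $\{ U_{n} \}$ is a base means that $[{^{Y}}2]_{\mu_{Y}}$ forces $\undertilde{\eta}$ into some $U_{n}$ of diameter $< 2^{-k}$; translating through Remark~\ref{filtfact}, $\bigcup \{ A_{n} : \operatorname{diam} U_{n} < 2^{-k} \}$ has $\mu_{Y}$-measure $1$. Likewise, whenever $\bigcap_{n \in s} U_{n} = \emptyset$ for a finite $s \subseteq \omega$ we have $\mu_{Y}(\bigcap_{n \in s} A_{n}) = 0$. Intersecting these countably many measure-one sets I obtain a Borel $M \subseteq {^{Y}}2$ of measure $1$ on which the family $\{ U_{n} : y \in A_{n} \}$ has the finite intersection property and contains sets of arbitrarily small diameter. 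On $M$ I would let $n_{k}(y)$ be the least $n$ with $y \in A_{n}$ and $\operatorname{diam} U_{n} < 2^{-k}$, pick a point $g_{k}(y) \in U_{n_{k}(y)}$, and note each $g_{k}$ is a countably-valued Borel map. The finite intersection property gives $U_{n_{k}(y)} \cap U_{n_{k+1}(y)} \neq \emptyset$, so $d(g_{k}(y), g_{k+1}(y)) < 2^{-k} + 2^{-k-1}$ and $(g_{k}(y))_{k}$ is Cauchy; by completeness of $P$ it converges, and I set $f(y) = \lim_{k} g_{k}(y)$ on $M$ and $f$ constant off $M$. As a pointwise limit of Borel functions, $f$ is Borel.

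Finally I would verify $[{^{Y}}2]_{\mu_{Y}} \forces f(F) = \undertilde{\eta}_{G}$. For the generic $F$ we have $F \in M$ (as $M$ has measure one), and for each $k$ both $g_{k}(F)$ and $\undertilde{\eta}_{G}$ lie in $U_{n_{k}(F)}$, using $F \in A_{n_{k}(F)} \iff \undertilde{\eta}_{G} \in U_{n_{k}(F)}$ from Remark~\ref{filtfact}; since this set has diameter $< 2^{-k}$, we get $d(g_{k}(F), \undertilde{\eta}_{G}) < 2^{-k}$, whence $f(F) = \lim_{k} g_{k}(F) = \undertilde{\eta}_{G}$. The main obstacle is the middle step: one must convert the soft assertion that $\undertilde{\eta}$ is a name for an honest point of $P$ into the quantitative measure-one coherence statements about the $A_{n}$ (arbitrarily small diameters available, incompatible basic sets excluded), since the $A_{n}$ are specified only up to null sets. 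Completeness of $P$ and countability of $Y$ (so that $\mathbb{B}(Y)$ is a ccc measure algebra and Baire sets are Borel) are exactly what make the Cauchy filter converge to a single point and render the limit function Borel.
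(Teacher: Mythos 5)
Your proof is correct, and it is worth noting that the paper itself gives no proof of Theorem \ref{namemeas} at all: it quotes the result as standard, citing Definition 551C of Fremlin's treatise and Theorems 3.1.5 and 3.1.7 of Bartoszy\'{n}ski--Judah. Your argument is essentially the standard one behind those citations, with one mild difference of packaging: Bartoszy\'{n}ski--Judah work with $P = {^{\omega}}2$ (or ${^{\omega}}\omega$) and read the Borel function off coordinatewise from the truth values $\truth{\undertilde{\eta}(n) = i}$, whereas you handle an arbitrary Polish $P$ directly, replacing coordinates by a countable base $\{U_{n}\}$, converting ``$\undertilde{\eta}$ names a point'' into the two countable families of measure-one constraints (small-diameter basic sets available at every level; finite subfamilies with empty intersection excluded), and then using the finite intersection property plus completeness of the metric to extract the limit. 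This is exactly the right way to avoid first composing with a Borel isomorphism $P \cong {^{\omega}}2$ via Theorem \ref{isom}, which would be the other routine route. All the delicate points are handled or are routine: the reduction to $\mathbb{B}(Y)$ via Theorem \ref{randfact} is legitimate given the paper's definition of support of a name (decided by the restriction of the generic filter to $\mathbb{B}(Y)$); the identification $F \in A_{n} \Leftrightarrow [A_{n}]_{\mu_{Y}} \in G \Leftrightarrow \undertilde{\eta}_{G} \in U_{n}$ combines Remark \ref{filtfact} with the defining property of the Boolean value $\truth{\undertilde{\eta} \in \check{U}_{n}}$, as you say; the absoluteness of emptiness of finite intersections of basic open sets (needed so that $\mu_{Y}(\bigcap_{n \in s} A_{n}) = 0$ transfers) holds because the ground-model countable dense set remains dense in the reinterpreted space; and the generic lands in the reinterpretation of the measure-one set $M$ by the characterization of $\mathbb{B}(Y)$-genericity stated in Section \ref{randmeas}. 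The only cosmetic slip is the strict inequality $d(g_{k}(F), \undertilde{\eta}_{G}) < 2^{-k}$, since reinterpreted basic sets have diameter at most, rather than strictly less than, the ground-model bound; replacing $<$ by $\leq$ changes nothing in the limit.
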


\begin{remark}\label{nofun} If $f \colon {^{Y}}2 \to P$ and $f' \colon {^{Y'}}2 \to P$ are two functions as in Theorem \ref{namemeas} for the same
$X$ and $\undertilde{\eta}$, then $\mu_{X}(\{ x \in {^{X}}2 \mid f(x \restrict Y) = f'(x \restrict Y')\}) = 1$.
\end{remark}


Theorem \ref{namemeas} gives us another way to associate measures to names for reals.

If
\begin{itemize}
\item $B$ is a Baire member of $\mathcal{P}({^{X}}2) \setminus \mathcal{N}_{X}$,
\item $Y \subset X$ is countable and $B$ has support $Y$,
\item $P$ is a Polish space,
\item $f \colon {^{Y}}2 \to P$ is a Borel function,
\item $Z$ is a proper subset of $Y$,
\item $z \in {^{Z}}2$ and $(B_{Y})_{z} \not \in \mathcal{N}_{Y \setminus Z}$,
\end{itemize}
then there is a Borel probability measure $\rho(z, B, f)$ on $P$ defined by letting $\rho(z, B, f)(I)$ be $$\dfrac{\mu_{Y \setminus Z}((B_{Y})_{z} \cap \{ y \in {^{Y \setminus Z}}2 \mid f(y \cup z) \in I\})}{\mu_{Y \setminus Z}((B_{Y})_{z})}$$
for all Borel $I \subseteq P$. (If we removed the denominator of this expression we would still have a suitable measure, just not
necessarily a probability measure.)

It follows from Remark \ref{filtfact} that if
\begin{itemize}
\item $G \subset \mathbb{B}(X)$ is a $V$-generic filter,
\item $F$ is the associated generic function,
\item $p = [B]_{\mu_{X}}$ is a member of $\mathbb{B}(X)$,
\item $\undertilde{\eta}$ is a $\mathbb{B}(X)$-name,
\item $Y \subset X$ is countable and nonempty, and $B$ and $\undertilde{\eta}$ have support $Y$,
\item $f \colon {^{Y}}2 \to P$ witnesses Theorem \ref{namemeas} for $\undertilde{\eta}$,
\item $Z$ is a proper subset of $Y$,
\item $p_{Z}^{X} \in G$, (here we are using the notation specified after Remark \ref{suppind})
\end{itemize}
then in $V[G]$ (indeed, in $V[F \restrict Z]$) there is a Borel probability measure $\nu(F \restrict Z, p, \undertilde{\eta})$ on $P$ defined by letting $$\nu(F \restrict Z, p, \undertilde{\eta})(I) = \rho(F \restrict Z, B, f)(I)$$ for all Borel $I \subseteq \creals$, using the reinterpretations of $f$ and $B$ in $V[G]$. By Remarks \ref{suppind}, \ref{filtfact} and \ref{nofun}, $\nu(F \restrict Z, p, \undertilde{\eta})$ does not depend on the choice of $f$, $Y$ or $B$.





The reader may check that the measure $\nu(F \restrict Z, [{^{X}}2]_{\mu_X}, \undertilde{\eta})$ is the same as the measure $\sigma(H, \undertilde{\eta})$ from above, when $F \restrict Z$ is the part of the generic function corresponding to $H$. We will not use this fact, however. The following lemma is trivial when we use the measures $\sigma(H, \undertilde{\eta})$.

\begin{lem}\label{outnull} Suppose that
\begin{itemize}
\item $X$ is a nonempty set,
\item $G \subset \mathbb{B}(X)$ is a $V$-generic filter,
\item $F$ is the associated generic function,
\item $p$ is a member of $\mathbb{B}(X)$,
\item $P$ is a Polish space,
\item $\undertilde{\eta}$ is a $\mathbb{B}(X)$-name for a member of $P$,
\item $Y \subset X$ is countable and nonempty, and $p$ and $\undertilde{\eta}$ have support $Y$,

\item $Z$ is a proper subset of $Y$,
\item $p_{Z}^{X} \in G$,
\item $\undertilde{\xi}$ is a $\mathbb{B}(X)$-name for an element of $\creals$ coding a $\nu(F \restrict Z, p, \undertilde{\eta})$-null set,
\item $W \subseteq X\setminus (Y \setminus Z)$ is countable,
\item $\undertilde{\xi}$ has support $W$.
\end{itemize}
Then $\undertilde{\eta}_{G} \not\in eval(\undertilde{\xi}_{G})$.
\end{lem}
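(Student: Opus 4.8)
The plan is to isolate the one block of coordinates, namely $Y \setminus Z$, on which $\undertilde{\eta}$ depends but on which \emph{neither} the measure $\nu(F \restrict Z, p, \undertilde{\eta})$ \emph{nor} the set coded by $\undertilde{\xi}$ depends, and to arrange that the generic on this block is added \emph{last}. Using Theorem \ref{randfact} twice together with the commutativity of the random algebra over disjoint index sets, I would factor $\mathbb{B}(X)$ as $\mathbb{B}(Z) * \mathbb{B}(X \setminus Y) * \mathbb{B}(Y \setminus Z)$, so that $V[G] = M[F \restrict (Y \setminus Z)]$, where $M = V[F \restrict Z][F \restrict (X \setminus Y)]$ and $F \restrict (Y \setminus Z)$ is $\mathbb{B}(Y \setminus Z)$-generic over $M$. (The degenerate cases $Z = \emptyset$ and $X = Y$ are handled by suppressing the corresponding trivial factor.) The fixed data defining the measure $\nu$ then sits below the last generic, while that last generic is exactly what realizes $\undertilde{\eta}$.

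The first key point is that both relevant objects already live in $M$. Since $\nu = \nu(F \restrict Z, p, \undertilde{\eta})$ depends only on $F \restrict Z$, it belongs to $V[F \restrict Z] \subseteq M$. Since $\undertilde{\xi}$ has support $W \subseteq X \setminus (Y \setminus Z) = Z \cup (X \setminus Y)$, the realization $\undertilde{\xi}_{G}$, and hence the Borel set $N := eval(\undertilde{\xi}_{G})$, is determined by $F \restrict W$, which lies in $M$; thus $N \in M$. Because $\undertilde{\xi}$ is forced to code a $\nu$-null set and the value $\nu(N)$ is computed absolutely from a Borel code for $N$ and the measure $\nu$, the set $N$ is already $\nu$-null in $M$.

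Next I would unwind the defining formula for $\rho(F \restrict Z, B, f)$, where $f \colon {^{Y}}2 \to P$ represents $\undertilde{\eta}$ as in Theorem \ref{namemeas} and $B$ is a support-$Y$ member of $p$. That formula says that $\nu(N) = 0$ is equivalent to the $\mu_{Y \setminus Z}$-nullity of the fibre-preimage $S := \{\, y \in (B_{Y})_{F \restrict Z} \mid f(y \cup (F \restrict Z)) \in N \,\}$, using that the denominator $\mu_{Y \setminus Z}((B_{Y})_{F \restrict Z})$ is positive. Moreover $S \in M$, being built from $f$, $B$, $F \restrict Z$ and $N$, all of which lie in $M$. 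Now $F \restrict (Y \setminus Z)$, being $\mathbb{B}(Y \setminus Z)$-generic over $M$, belongs to every reinterpreted $\mu_{Y \setminus Z}$-measure-one Baire set coded in $M$, and so it \emph{avoids} the (reinterpreted) null set $S$. Invoking Remark \ref{filtfact} to place the generic inside the fibre, i.e. $F \restrict (Y \setminus Z) \in (B_{Y})_{F \restrict Z}$, avoidance of $S$ then forces $f((F \restrict (Y \setminus Z)) \cup (F \restrict Z)) \notin N$; since $\undertilde{\eta}_{G} = f(F \restrict Y)$ by Theorem \ref{namemeas}, this is precisely $\undertilde{\eta}_{G} \notin eval(\undertilde{\xi}_{G})$.

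The hard part will be the bookkeeping that forces all three ingredients — the measure $\nu$, the null set $N$, and the preimage $S$ — to be computed inside the \emph{single} intermediate model $M$; this is exactly what the support hypothesis $W \cap (Y \setminus Z) = \emptyset$ buys, and it is the point that turns the otherwise-trivial slogan ``a random real avoids ground-model null sets'' into a proof. The remaining care goes into the absoluteness of the value $\nu(N)$ between $M$ and $V[G]$, into the factorization and reordering of the iteration (with its edge cases), and into pinning $F \restrict (Y \setminus Z)$ to the fibre $(B_{Y})_{F \restrict Z}$ rather than merely to the complement of $S$, so that avoidance of $S$ actually yields non-membership in $N$.
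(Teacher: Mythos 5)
Your overall architecture is sound and is essentially a factored, model-theoretic repackaging of the paper's argument, but there is one genuine gap, and it sits exactly at the step you tried to wave through: ``invoking Remark \ref{filtfact} to place the generic inside the fibre,'' i.e.\ to get $F \restrict (Y \setminus Z) \in (B_{Y})_{F \restrict Z}$. Remark \ref{filtfact} gives only two things: that $F \in B$ (reinterpreted) iff $[B]_{\mu_{X}} \in G$, and that \emph{when} $[B]_{\mu_{X}} \in G$ the fibre $(B_{Y})_{F \restrict Z}$ is non-null. Neither yields fibre membership from the hypotheses as stated, because the hypotheses put only $p_{Z}^{X}$ in $G$, and $p \leq p_{Z}^{X}$ is strictly stronger in general. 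Moreover the step cannot be repaired under the stated hypotheses, since without $p \in G$ the conclusion itself can fail: take $X = \{0,1,2\}$, $Y = \{0,1\}$, $Z = \{0\}$, $P = \creals$, $B = \{x \mid x(1) = 0\}$, $p = [B]_{\mu_{X}}$, let $\undertilde{\eta}$ name the constant sequence with value $F(1)$, and let $\undertilde{\xi}$ be the check-name (support $W = \{0\}$) for a code of $\{\bar{1}\}$. Then $p_{Z}^{X}$ is the maximum condition, $\nu(F \restrict Z, p, \undertilde{\eta})$ is the Dirac measure at $\bar{0}$ in every extension, so $\undertilde{\xi}$ does code a $\nu$-null set; yet on the measure-one-half set of generics with $F(1) = 1$ we get $\undertilde{\eta}_{G} = \bar{1} \in eval(\undertilde{\xi}_{G})$. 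The point is that your $S$ is null only for the \emph{conditioned} measure: when $F \restrict (Y \setminus Z)$ falls outside $(B_{Y})_{F \restrict Z}$, randomness with respect to plain $\mu_{Y \setminus Z}$ over $M$ says nothing about where $f(F \restrict Y)$ lands.

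You are in good company: the paper's own proof tacitly assumes $p \in G$ --- it asserts that $B_{1} \subseteq B_{0} \subseteq B$ satisfies $[B_{1}]_{\mu_{X}} \in G$, which already entails $p = [B]_{\mu_{X}} \in G$ --- and in the lemma's only application (Claim 2 in the proof of Theorem \ref{mainthrm}) the conclusion is needed only below $p^{0}$ and $p^{1}$ respectively, where this holds. So the intended hypothesis is $p \in G$ (equivalently, the conclusion should read ``$p \forces \undertilde{\eta} \not\in eval(\undertilde{\xi})$''), and with that amendment your proof closes cleanly: $p \in G$ gives $F \in B$ by the first part of Remark \ref{filtfact}, and since $B$ has support $Y$ it coincides with $(B_{Y})^{X}$, so $F \restrict Y \in B_{Y}$, i.e.\ $F \restrict (Y \setminus Z) \in (B_{Y})_{F \restrict Z}$, and your avoidance-of-$S$ argument finishes. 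For comparison: the paper never factors the forcing. It runs the identical computation inside $V$, shrinking $B$ to $B_{0}$ (fibrewise non-null), then to $B_{1}$ (where $g(x \restrict W)$ codes a $\rho(x \restrict Z, B_{0}, f)$-null set), then --- by Fubini, which is precisely the nullity of your $S$ integrated over $x \restrict (Z \cup W)$ --- to $B_{2}$, where $f(x \restrict Y)$ avoids $eval(g(x \restrict W))$, concluding $[B_{2}]_{\mu_{X}} = [B_{1}]_{\mu_{X}} \in G$. Your three-step factorization $\mathbb{B}(Z) * \mathbb{B}(X \setminus Y) * \mathbb{B}(Y \setminus Z)$ buys the conceptually transparent slogan that a random function over $M$ avoids null sets coded in $M$, at the cost of the absoluteness bookkeeping for $\nu(N)$ between $M$ and $V[G]$ that you correctly flagged (and which is routine, reducing to absoluteness of $\mu_{Y \setminus Z}$-measures of coded Baire sets); the paper's in-$V$ Fubini computation avoids the intermediate-model machinery altogether.
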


\begin{proof} Let $f \colon {^{Y}}2 \to P$ witness Theorem \ref{namemeas} for $\undertilde{\eta}$, and
let $B$ be a Baire subset of ${^{X}}2$ in $V$ with support $Y$ such that $p = [B]_{\mu_{X}}$. Let $B_{0}$ be the set of $x \in B$ such that $(B_{Y})_{x \restrict Z} \not\in \mathcal{N}_{Y \setminus Z}$.
By Remark \ref{filtfact}, $B_{0} \bigtriangleup B \in \mathcal{N}_{X}$. Let $g \colon {^{W}}2 \to \creals$ witness Theorem \ref{namemeas} for
$\undertilde{\xi}$.
Then the set of $x \in B_{0}$ such that $g(x \restrict W)$ is a code for a $\rho(x \restrict Z, B_{0}, f)$-null set is in $G$. Call this set $B_{1}$. For each $x \in B_{1}$, the set of $y \in {^{Y \setminus Z}}2$ such that
$$y \cup (x\restrict Z) \in (B_{1})_{Y}$$ and
$$f(y \cup (x \restrict Z)) \text{ is in the set coded by }g(x \restrict W)$$
is in $\mathcal{N}_{Y \setminus Z}$.
Let $B_{2}$ be the set of $x \in B_{1}$ such that $f(x \restrict Y)$ is not in the set coded by $g(x \restrict W)$. Then $B_{1} \bigtriangleup B_{2} \in \mathcal{N}_{X}$, so $[B_{1}]_{\mu_{X}} = [B_{2}]_{\mu_{X}} \in G$.
\end{proof}

Given $X \neq \emptyset$, $Y \subseteq X$ and an injective function $\pi \colon Y \to X$, we extend $\pi$ to all Baire subsets of ${^{X}}2$
with support $Y$, by letting $\pi(C^{i}_{x}) = C^{i}_{\pi(x)}$ for all $y \in Y$ and $i \in 2$, and letting $\pi(\bigcap_{i \in \omega}B_{i}) = \bigcap_{i \in \omega}\pi(B_{i})$ and $\pi({^{X}}2 \setminus B) =
{^{X}}2 \setminus \pi(B)$. This map preserves $\mu_{X}$, so it extends also to conditions in $\mathbb{B}(X)$ with support $Y$. We extend $\pi$
to $\mathbb{B}(X)$-names with support $Y$ for members of ground model Polish spaces as follows. For any Polish space $P$ and any $\mathbb{B}(X)$-name $\undertilde{\eta}$ with support $Y$ for a member of $P$,  $\pi(\undertilde{\eta})$ is a $\mathbb{B}(X)$-name with support $\pi[Y]$ for a member of $P$. Furthermore, if $\undertilde{\theta}$ is a $\mathbb{B}(X)$-name with support $Y$ for a Borel subset of $P$ (i.e., a name for a code for the realization of $\undertilde{\theta}$ has support $Y$), then $\pi(\undertilde{\theta})$ is a $\mathbb{B}(X)$-name with support $\pi[Y]$ for a Borel subset of $P$ (the name induced by the $\pi$-image of the name for the code corresponding to $\undertilde{\theta}$), and, for each $p \in \mathbb{B}(X)$ with support $Y$, $\pi(p) \forces \pi(\undertilde{\eta}) \in \pi(\undertilde{\theta})$ if and only if $p \forces \undertilde{\eta} \in \undertilde{\theta}$.




The following lemma is immediate from the definitions.

\begin{lem}\label{goodmeas} Suppose that
\begin{itemize}
\item $X$ is a nonempty set,
\item $F \colon X \to 2$ is a $V$-generic function for $\mathbb{B}(X)$,
\item $Y$ is a countable subset of $X$,
\item $Z$ is a proper subset of $Y$,
\item $\pi \colon Y \to X$ is an injection which fixes the members of $Z$,
\item $p = [B]_{\mu_{X}}$ is a condition in $\mathbb{B}(X)$ with support $Y$,
\item $p_{Z}^{X} \in G$,
\item $P$ is a Polish space,
\item $\undertilde{\eta}$ is a $\mathbb{B}(X)$-name for an element of $P$,
\item $p$ and $\undertilde{\eta}$ have support $Y$,
\end{itemize}
Then in $V[F]$, $\nu(F \restrict Z, \pi(p), \pi(\undertilde{\eta})) = \nu(F \restrict Z, p, \undertilde{\eta})$.
\end{lem}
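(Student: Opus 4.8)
The plan is to unwind both sides of the claimed equality through the definition of $\nu(F \restrict Z, p, \undertilde{\eta})$ and reduce everything to the underlying measures $\rho(F \restrict Z, B, f)$, then exploit the fact that $\pi$ fixes $Z$ pointwise together with the $\mu$-preservation of $\pi$. Recall that $\nu(F \restrict Z, p, \undertilde{\eta})(I) = \rho(F \restrict Z, B, f)(I)$ for a Baire $B$ with $p = [B]_{\mu_X}$ of support $Y$ and a Borel $f \colon {^Y}2 \to P$ witnessing Theorem \ref{namemeas} for $\undertilde{\eta}$, and that by Remarks \ref{suppind}, \ref{filtfact}, \ref{nofun} this value is independent of the choice of $B$ and $f$. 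So I would first fix witnesses $B$ and $f$ for the right-hand side $\nu(F \restrict Z, p, \undertilde{\eta})$, and then produce explicit witnesses for the left-hand side $\nu(F \restrict Z, \pi(p), \pi(\undertilde{\eta}))$ and check they give the same value.

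The natural witnesses for the left-hand side are $\pi(B)$ (a Baire set of support $\pi[Y]$ with $\pi(p) = [\pi(B)]_{\mu_X}$) and the Borel function $f \circ \pi^{-1} \colon {^{\pi[Y]}}2 \to P$, which one checks witnesses Theorem \ref{namemeas} for $\pi(\undertilde{\eta})$ using the compatibility $\pi(p) \forces \pi(\undertilde{\eta}) \in \pi(\undertilde{\theta}) \iff p \forces \undertilde{\eta} \in \undertilde{\theta}$ established just before the lemma (applied to names $\undertilde{\theta}$ for basic Borel sets). The key point is that since $\pi$ fixes $Z$, the ``base'' coordinate $F \restrict Z$ is literally unchanged under $\pi$, so both $\rho$-expressions are conditioned on the same $z = F \restrict Z$. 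I would then write out $\rho(F\restrict Z, \pi(B), f\circ\pi^{-1})(I)$ as the quotient
\[
\frac{\mu_{\pi[Y]\setminus Z}\bigl((\pi(B)_{\pi[Y]})_{z} \cap \{ w \mid (f\circ\pi^{-1})(w \cup z) \in I\}\bigr)}{\mu_{\pi[Y]\setminus Z}\bigl((\pi(B)_{\pi[Y]})_{z}\bigr)}
\]
and match it term-by-term with $\rho(F\restrict Z, B, f)(I)$ via the measure-preserving bijection ${^{Y\setminus Z}}2 \to {^{\pi[Y]\setminus Z}}2$ induced by $\pi$, which carries $(B_Y)_z$ to $(\pi(B)_{\pi[Y]})_z$ and intertwines $f$ with $f \circ \pi^{-1}$; this makes numerator and denominator agree.

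The main obstacle I anticipate is purely bookkeeping rather than conceptual: verifying that $\pi$, which was defined to preserve $\mu_X$ and commute with the Baire operations, genuinely carries the fibered/conditioned objects $(B_Y)_{z}$ to $(\pi(B)_{\pi[Y]})_{z}$ with $z$ held fixed, and that $f \circ \pi^{-1}$ is a legitimate Theorem \ref{namemeas} witness for $\pi(\undertilde{\eta})$. Both reduce to the definitions of $B_Y$, $E^X$ (Definition \ref{updown}) and the support machinery, so the claim that the lemma is ``immediate from the definitions'' is accurate — the only care needed is to track that $\pi$ fixing $Z$ is exactly what keeps the conditioning coordinate fixed, and that the measure-preservation of $\pi$ descends to the relevant restricted product measures $\mu_{Y\setminus Z}$ and $\mu_{\pi[Y]\setminus Z}$ by Remark \ref{suppind}.
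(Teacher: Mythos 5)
Your proposal is correct and coincides with the paper's own treatment: the paper gives no written proof, stating only that the lemma ``is immediate from the definitions,'' and your argument is exactly that unwinding --- witnessing the left side by $\pi(B)$ and $f\circ\pi^{-1}$, using that $\pi$ fixes $Z$ so the conditioning coordinate $F\restrict Z$ is unchanged, and matching numerator and denominator via the measure-preserving relabeling ${^{Y\setminus Z}}2\to{^{\pi[Y]\setminus Z}}2$. The only point worth making explicit in your bookkeeping is that this same relabeling gives $\pi(p)_{Z}^{X}=p_{Z}^{X}$, so the hypothesis $p_{Z}^{X}\in G$ also guarantees that the left-hand measure $\nu(F\restrict Z,\pi(p),\pi(\undertilde{\eta}))$ is defined.
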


\subsection{Second proof}

In this section we present our original proof of our main theorem. We note that David Fremlin \cite{F09} has written up the argument of this section in the style of his treatise on measure theory (e.g. \cite{F2, F4, F5}).

In this section we force with $\mathbb{B}(\kappa)$, where $\kappa^{\mathfrak{c}} = \kappa$.
So the $X$'s from the previous section will become $\kappa$'s. Likewise, $Y$, $Z$ and $W$ will become $b$, $a$ and $d$.

\begin{thrm}\label{mainthrm} Suppose that $\kappa$ is a cardinal such that $\kappa^{\mathfrak{c}} = \kappa$, and let $P$ be a Polish space.
Then $\mathbb{B}(\kappa)$ forces that
$2^{\aleph_{0}} = 2^{\aleph_{1}} = \kappa$ and that the set of universally measurable subsets of $P$ has cardinality $\kappa$.
\end{thrm}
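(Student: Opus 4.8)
The plan is to combine the cardinal arithmetic already in hand with the characterization of universal measurability in the random extension. First I would invoke Theorem \ref{arith}, which gives $2^{\aleph_0} = 2^{(\mathfrak{c}^V)} = \kappa$ in the $\mathbb{B}(\kappa)$ extension; since $\aleph_1 \leq \mathfrak{c}^V$ (trivially, as $\mathfrak{c}^V \geq \aleph_1$ in $V$), this also delivers $2^{\aleph_1} = \kappa$, which settles the first half of the statement. The real content is the computation of the number of universally measurable subsets of $P$. Here the upper bound is the essential positive result and the lower bound is a routine counting observation.

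For the upper bound, I would apply Theorem \ref{first}: every universally measurable subset $A$ of $P$ in the $\mathbb{B}(\kappa)$ extension is a union of $\mathfrak{c}^V$ many (reinterpreted) ground-model Borel sets. A ground-model Borel set is coded by an element of $\creals$, and there are exactly $\mathfrak{c}^V$ such codes in $V$; a union of $\mathfrak{c}^V$ many of them is specified by a subset of this set of codes of size at most $\mathfrak{c}^V$. Hence the number of universally measurable sets is at most the number of such collections of codes, namely $(\mathfrak{c}^V)^{\mathfrak{c}^V} = 2^{\mathfrak{c}^V} = 2^{(\mathfrak{c}^V)}$, which by Theorem \ref{arith} equals $\kappa$. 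So there are at most $\kappa$ universally measurable subsets of $P$.

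For the lower bound, I would exhibit $\kappa$ many distinct universally measurable sets. Every singleton $\{x\}$ for $x \in P$ is Borel, hence universally measurable, and $P$ itself has cardinality $2^{\aleph_0} = \kappa$ in the extension, so there are already $\kappa$ many such sets. This gives at least $\kappa$ universally measurable subsets, and combined with the upper bound yields exactly $\kappa$.

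The one step that requires care—and which I expect to be the main point to get right rather than the main obstacle—is the passage from ``union of $\mathfrak{c}^V$ many ground-model Borel sets'' to the counting bound $2^{(\mathfrak{c}^V)}$. One must be careful that the relevant codes live in the ground model $V$ (so that their number is $\mathfrak{c}^V$ and not $\mathfrak{c} = \kappa$ as computed in the extension), which is exactly what Theorem \ref{first} provides via the reinterpretation of ground-model Borel sets. Once this is observed, the bound $(\mathfrak{c}^V)^{\mathfrak{c}^V} = 2^{(\mathfrak{c}^V)} = \kappa$ is immediate from Theorem \ref{arith}, and no genuinely new forcing argument is needed; the theorem is essentially a corollary of Theorems \ref{arith} and \ref{first} together with the trivial lower bound.
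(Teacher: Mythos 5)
Your overall strategy is sound, but it is the paper's \emph{first} proof rather than the proof the paper attaches to Theorem \ref{mainthrm}: the sentence immediately preceding Theorem \ref{arith} states exactly that combining Theorem \ref{arith} with Theorem \ref{first} yields the consistency result. The argument the paper actually gives for Theorem \ref{mainthrm} is the authors' original, self-contained one, which never invokes Theorem \ref{first}: it associates to each name $\underTilde{A}$ for a universally measurable set a set $u(\underTilde{A}) \subset \kappa$ of size $\mathfrak{c}$ and a continuum-sized catalogue $R(\underTilde{A})$ of tuples $\langle t, a, b, p, \undertilde{\eta}\rangle$ recording which conditions force which countable-support names into $\underTilde{A}$, and then uses an isomorphism analysis of such names (via the measures $\nu(F \restrict Z, p, \undertilde{\eta})$ together with Lemmas \ref{outnull} and \ref{goodmeas}) to show that $\underTilde{A}_{G}$ is recoverable from $u(\underTilde{A})$, $R(\underTilde{A})$ and $G$; since there are only $\kappa$ many such pairs in $V$, the bound follows. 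Your route buys brevity by quoting Theorem \ref{first}; the paper's second proof is independent of that machinery. Your treatment of $2^{\aleph_1}$ (squeezing $\kappa = 2^{\aleph_0} \leq 2^{\aleph_1} \leq 2^{(\mathfrak{c}^{V})} = \kappa$) and the trivial lower bound via singletons are both fine.

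There is, however, one genuine misstep in your counting, precisely at the spot you flagged as the point ``requiring care'' --- and you have it backwards. Theorem \ref{first} does \emph{not} say that a universally measurable set is a union of $\mathfrak{c}^{V}$ many reinterpreted \emph{ground-model} Borel sets; it says it is a union of $\mathfrak{c}^{V}$ many Borel sets \emph{of the extension} (in its proof, Borel sets of an intermediate $\mathbb{B}(Y)$-extension with $|Y| \leq \mathfrak{c}^{V}$). The ground-model version is false: if $x$ is any random real added by $G$, then $\{x\}$ is Borel in $V[G]$, hence universally measurable, but it is not a union of reinterpreted ground-model Borel sets, since emptiness of a coded Borel set and membership of ground-model points in it are absolute, so any ground-model code whose evaluation is contained in $\{x\}$ evaluates to $\emptyset$. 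So the relevant codes do \emph{not} live in $V$, and your count of families of $V$-codes does not apply. Fortunately the arithmetic still closes without this premise: in $V[G]$ there are $2^{\aleph_0} = \kappa$ Borel codes, a union of $\mathfrak{c}^{V}$ many Borel sets is determined by a function from $\mathfrak{c}^{V}$ into these codes, and computed in $V[G]$ one has $\kappa^{\mathfrak{c}^{V}} = (2^{\aleph_{0}})^{\mathfrak{c}^{V}} = 2^{\mathfrak{c}^{V}} = \kappa$ by Theorem \ref{arith}. (Alternatively, one can count in $V$ the countable-support names for Borel sets, of which there are $\kappa$, using $\kappa^{\mathfrak{c}} = \kappa$ for families of size $\mathfrak{c}^{V}$.) With that correction your proposal is a valid proof of Theorem \ref{mainthrm} along the paper's first route.
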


\begin{proof}
It follows from Theorem \ref{arith} that $\mathbb{B}(\kappa)$ forces $2^{\aleph_{0}} = 2^{\aleph_{1}} = \kappa$.
It remains to see that the set of universally measurable subsets of $P$ has cardinality $\kappa$.


Let $T$ be the set of $\langle a, b, p, \undertilde{\eta}\rangle$ such that
\begin{itemize}
\item $p \in \mathbb{B}(\kappa)$,
\item $b \in [\kappa]^{\aleph_{0}}$,
\item $a\subseteq b$,
\item $\undertilde{\eta}$ is a $\mathbb{B}(\kappa)$-name for an element of $P$,
\item $p$ and $\undertilde{\eta}$ have support $b$.
\end{itemize}

We will associate to each $\mathbb{B}(\kappa)$-name $\underTilde{A}$ for a universally measurable subset of $P$ a set $u(\underTilde{A}) \subset \kappa$ of cardinality $2^{\aleph_{0}}$
and a continuum-sized collection $R(\underTilde{A})$ of sequences $\langle t, a, b, p, \undertilde{\eta}\rangle$ such that
\begin{itemize}
\item $t \in \{ True, False\}$,
\item $\langle a, b, p, \undertilde{\eta}\rangle \in T$,
\end{itemize}
in such a way that the realization of $\underTilde{A}$ is completely determined by $u(\underTilde{A})$, $R(\underTilde{A})$ and the generic filter (so $\underTilde{A}$ itself is not needed). Since there are only $\kappa$ many such pairs $(u(\underTilde{A})$, $R(\underTilde{A}))$ in the ground model, the collection of universally measurable subsets of $P$ in the extension will have cardinality $\kappa$.

We say that two elements $\langle a, b, p, \undertilde{\eta}\rangle$, $\langle a', b', p', \undertilde{\eta}'\rangle$ of $T$ are \emph{isomorphic}
if $a = a'$ and there is a bijection $\pi \colon b \to b'$ which fixes the members of $a$ such that $p' = \pi(p)$ and $\undertilde{\eta}' = \pi(\undertilde{\eta})$.
The following claim follows from the fact that every $\langle a, b, p, \undertilde{\eta} \rangle \in T$ is isomorphic to a sequence $\langle a, b', p', \undertilde{\eta}' \rangle \in T$ with $b' \subseteq a \cup (\sup(a), \sup(a) + \omega)$.

\begin{claim}[1]For each countable $a^{*} \subset \kappa$, there are $2^{\aleph_{0}}$ many isomorphism classes of $\{ \langle a,b, p, \undertilde{\eta}\rangle \in T \mid a = a^{*}\}$.
\end{claim}

Let $\underTilde{A}$ be a $\mathbb{B}(\kappa)$-name for a subset of $P$ such that every condition forces that the realization of $\underTilde{A}$ will be universally measurable. Let $T^{+}$ be the set of $\langle t, a, b, p, \undertilde{\eta}\rangle$ such that
\begin{itemize}
\item $\langle a, b, p, \undertilde{\eta}\rangle \in T$
\item $t \in \{ True, False\}$,
\item $p \forces ``\undertilde{\eta} \in \underTilde{A} \Leftrightarrow t$."
\end{itemize}
We say that two elements $\langle t, a, b, p, \undertilde{\eta}\rangle$, $\langle t, 'a', b', p', \undertilde{\eta}'\rangle$ of $T^{+}$ are \emph{isomorphic} if $\langle a, b, p, \undertilde{\eta}\rangle$ and $\langle a', b', p', \undertilde{\eta}'\rangle$ are isomorphic and $t = t'$.


Suppose that $C$ is an isomorphism class of $T$ such that $a$ is a proper subset of $b$ for every $\langle a, b, p, \undertilde{\eta}\rangle$ in $C$.
For each $\langle a, b, p, \undertilde{\eta}\rangle \in C$, the condition $p_{a}^{\kappa}$ is the same. By Lemma \ref{goodmeas}, $p_{a}^{\kappa}$ forces that the measure $\nu(F \restrict a, p, \undertilde{\eta})$ exists and is the same for all $\langle a, b, p, \undertilde{\eta}\rangle \in C$, where $F$ is the generic function. Let $\undertilde{\nu}$ be a $\mathbb{B}(\kappa)$-name for this measure, as forced by $p_{a}^{\kappa}$. Since $\underTilde{A}$ is a $\mathbb{B}(\kappa)$-name for a universally measurable set, there exist $\mathbb{B}(\kappa)$-names $\undertilde{\theta}$ and $\undertilde{\zeta}$ for Borel subsets of $P$ such that $p_{a}^{\kappa}$ forces that $\undertilde{\zeta}_{G}$ will be $\undertilde{\nu}_{G}$-null, and also that the symmetric difference of $\underTilde{A}_{G}$ and $\undertilde{\theta}_{G}$ will be contained in $\undertilde{\zeta}_{G}$, where $G$ is the generic filter.
Let $D(C)$ be the set of all countable $d \subset \kappa$ for which there exist $\mathbb{B}(\kappa)$-names for codes for such $\undertilde{\theta}_{G}$ and $\undertilde{\zeta}_{G}$ with support $d$.
For notational convenience, let $D(C) = \{ \emptyset \}$ whenever $C$ is an isomorphism class of $T$ such that $a = b$ for all $\langle a, b, p, \undertilde{\eta} \rangle$ in $C$.

For each isomorphism class $C^{*}$ of $T^{+}$, let $K(C^{*})$ be the set of countable $c \subset \kappa$ such that $(b \setminus a) \cap c$ is nonempty for all $\langle t, a, b, p, \undertilde{\eta} \rangle \in C^{*}$.

\begin{claim}[2] Suppose that
\begin{itemize}
\item $C$ is an isomorphism class of $T$,
\item $d \in D(C)$,
\item $\langle a, b^{0}, p^{0}, \undertilde{\eta}^{0}\rangle$ and $\langle a, b^{1}, p^{1}, \undertilde{\eta}^{1}\rangle$ are both in $C$,
\item $\langle t, a, b^{0}, p^{0}, \undertilde{\eta}^{0}\rangle \in T^{+}$,
\item $K(C^{*}) = \emptyset$, where $C^{*}$ is the isomorphism class of $\langle t, a, b^{0}, p^{0}, \undertilde{\eta}^{0}\rangle$,
\item $(b^{1} \setminus a) \cap d = \emptyset$.
\end{itemize}
Then $\langle t, a, b^{1}, p^{1}, \undertilde{\eta}^{1}\rangle \in T^{+}$.
\end{claim}

Proof of Claim 2: If $a = b$ for all $\langle a, b, p, \undertilde{\eta}\rangle \in C$, then the claim follows immediately,
so suppose that $a$ is a proper subset of $b$ for all $\langle a, b, p, \undertilde{\eta} \rangle \in C$.
Applying the fact that $K(C^{*}) = \emptyset$,
we may assume, by replacing $\langle t^{0}, a, b^{0}, p^{0}, \undertilde{\eta}^{0}\rangle$ with an isomorphic copy if necessary, that $b^{0} \setminus a$ and $b^{1} \setminus a$ are disjoint from $d$ and each other.

Since $d \in D(C)$, there are $\mathbb{B}(\kappa)$-names with support $d$ for members of $\creals$ inducing $\mathbb{B}(\kappa)$-names $\undertilde{\theta}$ and $\undertilde{\zeta}$ for Borel subsets of $P$ such that
$p_{a}^{\kappa}$ forces that $\undertilde{\zeta}_{G}$ will be $\undertilde{\nu}_{G}$-null, and also that the symmetric difference of $\underTilde{A}_{G}$ and $\undertilde{\theta}_{G}$ will be contained in $\undertilde{\zeta}_{G}$, where $G$ is the generic filter.
By Lemma \ref{outnull}, $p_{a}^{\kappa}$ forces that the realizations of $\undertilde{\eta}^{0}$ and $\undertilde{\eta}^{1}$ will both fall outside of the realization of $\undertilde{\zeta}$.

 Let $\pi$ be a permutation of $\kappa$, fixing the members of $a \cup d$, such that $p^{1} = \pi(p^{0})$ and $\undertilde{\eta}^{1} = \pi(\undertilde{\eta}^{0})$. Then $\theta = \pi(\undertilde{\theta})$, so we have the following.

\begin{tabular}{ r c l r}
  \(t = True\) & \(\Longleftrightarrow\) & \(p^{0} \forces \undertilde{\eta}^{0} \in \underTilde{A}\) & (by the definition of $T^{+}$)\\
   & \(\Longleftrightarrow\) & \(p^{0} \forces \undertilde{\eta}^{0} \in \undertilde{\theta}\) & (by the previous paragraph)\\
   & \(\Longleftrightarrow\) & \(\pi(p^{0}) \forces \pi(\undertilde{\eta}^{0}) \in \pi(\undertilde{\theta})\) & (by the remarks before Lemma \ref{goodmeas}) \\
   & \(\Longleftrightarrow\) & \(p^{1} \forces \undertilde{\eta}^{1} \in \undertilde{\theta}\) & (since $p^{1} = \pi(p^{0})$, $\undertilde{\eta}^{1} = \pi(\undertilde{\eta}^{0})$ and $\theta = \pi(\undertilde{\theta})$)\\
   & \(\Longleftrightarrow\) & \(p^{1} \forces \undertilde{\eta}^{1} \in \underTilde{A}\) & (by the previous paragraph)
\end{tabular}

\vspace{\baselineskip}
\noindent Similarly, $t = False$ if and only if $p^{1} \forces \undertilde{\eta}^{1} \not\in \underTilde{A}$. Thus $\langle t, a, b^{1}, p^{1}, \undertilde{\eta}^{1} \rangle \in T^{+}$. This completes the proof of Claim 2.

\vspace{\baselineskip}




For each isomorphism class $C$ of $T$, fix a set $d(C) \in D(C)$. For each isomorphism class $C^{*}$ of $T^{+}$, fix a set $k(C^{*}) \in K(C^{*}) \cup \{\emptyset\}$ such that $k(C^{*}) \in K(C^{*})$ if $K(C^{*}) \neq \emptyset$. Applying Claim 1, let $u(\underTilde{A})$ be a subset of $\kappa$ of cardinality $2^{\aleph_{0}}$ such that
\begin{itemize}
\item $d(C) \subset u(\underTilde{A})$ whenever $C$ is the isomorphism class of a sequence $\langle a, b, p, \undertilde{\eta} \rangle$
with $a \subset u(\underTilde{A})$,
\item $k(C^{*}) \subset u(\underTilde{A})$ whenever $C^{*}$
is the isomorphism class of a sequence
$$\langle t, a, b, p, \undertilde{\eta}\rangle \in T^{+}$$ with $a \subset u(\underTilde{A})$.
\end{itemize}
Let $T(u(\underTilde{A}))$ be the set of $\langle a, b, p, \undertilde{\eta}\rangle \in T$ such that $a = b \cap u(\underTilde{A})$, and define $T^{+}(u(\underTilde{A}))$ similarly. Then for any $\langle t, a, b, p, \undertilde{\eta} \rangle \in T^{+}(u(\underTilde{A}))$, $K(C^{*}) = \emptyset$, where $C^{*}$ is the isomorphism class of $\langle t, a, b, p, \undertilde{\eta}\rangle$ (since $k(C^{*}) \subset u(\underTilde{A})$).

Let $b(\underTilde{A}) = (sup(u(\underTilde{A})), sup(u(\underTilde{A}))+\omega)$.
Let $R(\underTilde{A})$ be the set of sequences $\langle t, a, b, p, \undertilde{\eta}\rangle \in T^{+}(u(\underTilde{A}))$ such that $b \setminus a = b(\underTilde{A})$.


Let $G \subset \mathbb{B}(\kappa)$ be a $V$-generic filter. We claim that $\underTilde{A}_{G}$ can be recovered from $u(\undertilde{A})$, $R(\underTilde{A})$ and $G$. To see this, let $\undertilde{\eta}$ be a $\mathbb{B}(\kappa)$-name for an element of
$P$. Let $b \subset \kappa$ be countable such that $\undertilde{\eta}$ has support $b$ and $b \setminus u(\underTilde{A})$ is infinite, and let $a = b \cap u(\underTilde{A})$. Let $\pi \colon b \to a \cup b(\underTilde{A})$ be a bijection fixing $a$. Then if $\undertilde{\eta}_{G} \in \underTilde{A}_{G}$, there exists $p^{0} \in G$ such that
\begin{itemize}
\item $\langle a, b, p^{0}, \undertilde{\eta}\rangle \in T(u(\underTilde{A}))$,
\item $\langle True, a, b(\underTilde{A}), \pi(p^{0}), \pi(\undertilde{\eta})\rangle \in R(\underTilde{A})$ (by Claim 2).
\end{itemize}
Furthermore, if $\undertilde{\eta}_{G} \not\in \underTilde{A}_{G}$, there exists $p^{1}\in G$ such that
\begin{itemize}
\item $\langle a, b, p^{1}, \undertilde{\eta}\rangle \in T(u(\underTilde{A}))$,
\item $\langle False, a, b(\underTilde{A}), \pi(p^{1}), \pi(\undertilde{\eta})\rangle \in R(\underTilde{A})$ (by Claim 2).
\end{itemize}
However, such conditions $p^{0}$ and $p^{1}$ cannot both exist, since they would be compatible, and thus $\pi(p^{0})$ and $\pi(p^{1})$ would be compatible.
\end{proof}

\section{CPA}\label{CPA}

The consistency of the negative answer to Mauldin's question follows from a very slight variation of Ciesielski and Pawlikowski's proof \cite{CiPa} that every universally null set has cardinality at most $\aleph_{1}$ in the iterated Sacks model. The modified version of their proof shows that every universally measurable set is the union of at most $\aleph_{1}$ many perfect sets and singletons in this model. Following \cite{CiPa}, we say that a \emph{cube} is a continuous injection from $\Pi_{n \in \omega}C_{n}$ to $P$, where $P$ is a Polish space and each $C_{n}$ is a
perfect subset of $\creals$. Let $\mathcal{F}_{cube}$ denote the set of cubes, and let $Perf(P)$ denote the collection of perfect subsets of $P$. A set $\mathcal{E} \subseteq Perf(P)$ is said to be $\mathcal{F}_{cube}$-\emph{dense} if for each $f \in \mathcal{F}_{cube}$ there is a $g \in \mathcal{F}_{cube}$ such that $g \subseteq f$ and $range(g) \in \mathcal{E}$. The axiom CPA$_{cube}(P)$ says that for every $\mathcal{F}_{cube}$-dense $\mathcal{E} \subseteq Perf(P)$ there is a $\mathcal{E}_{0} \subseteq \mathcal{E}$ such that $|\mathcal{E}_{0}| \leq \aleph_{1}$ and $|\creals \setminus \bigcup\mathcal{E}_{0}| \leq \aleph_{1}$.

The two following lemmas are Fact 1.0.2 and Claim 1.1.4 of \cite{CiPa}.

\begin{lem} A set $\mathcal{E} \subseteq Perf(P)$ is $\mathcal{F}_{cube}$-dense if and only if for every continuous injection $f \colon \Pi_{n \in \omega}\creals \to P$ there is a cube $g \subseteq f$ such that $range(g) \in \mathcal{E}$.
\end{lem}

\begin{lem}\label{secondquote} If $D$ is a Borel subset of $\Pi_{n \in \omega}\creals$ and $D$ has positive measure in the usual product measure on $\Pi_{n \in \omega}\creals$, then $D$ contains a
set of the form $\Pi_{n \in \omega}C_{n}$, where each $C_{n} \in Perf(\creals)$.
\end{lem}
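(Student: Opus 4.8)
The plan is to build the perfect sets $C_n$ by a simultaneous fusion construction that maintains positive conditional measure throughout. First I would replace $D$ by a compact subset of positive measure: the usual product measure $\mu$ on $\prod_{n}\creals \cong 2^{\omega \times \omega}$ is a Radon probability measure on a compact metric space, hence inner regular, so $D$ contains a compact $K$ with $\mu(K) > 0$, and any product of perfect sets inside $K$ lies inside $D$. So I assume $D$ is compact. The goal is then to produce, for each $n$, a perfect subtree $T_n \subseteq 2^{<\omega}$ with $C_n = [T_n]$ and $\prod_n [T_n] \subseteq D$, by a diagonal recursion that commits only finitely many coordinates at each finite stage but returns to every coordinate cofinally often and adds splitting to it.

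The recursion maintains a clopen ``box'' invariant. At stage $s$ I would have finite perfect stumps $t^s_0, \dots, t^s_s \subseteq 2^{<\omega}$ (finite binary trees in which every branch splits at least $s$ times and all leaves lie at a common level $\ell_s$, with $\ell_s \to \infty$), determining for each choice of leaves $\sigma_0, \dots, \sigma_s$ a basic box $B = [\sigma_0] \times \cdots \times [\sigma_s] \times \prod_{n > s}\creals$, and I would require that $\mu(D \cap B) > (1 - \varepsilon_s)\,\mu(B)$ for every such leaf-box, with $\varepsilon_s$ small. The construction alternates two moves. \emph{(Refine)} Given a leaf-box $B$, the Lebesgue density theorem applied to the compact set $D$ inside $B$ produces a smaller box $B' \subseteq B$ on which $D$ has density greater than $1 - \varepsilon'$ for any prescribed $\varepsilon'$; deepening all the stumps so as to reach such a $B'$ simultaneously inside each leaf-box is what drives $\ell_s \to \infty$ and resets the density parameter. \emph{(Split)} To make a coordinate split at a node, I partition the relevant leaf-box along a single coordinate bit into its two clopen halves $B_0, B_1$; the elementary averaging bound shows that if $\mu(D \cap B) > (1-\varepsilon)\mu(B)$ then $\mu(D \cap B_j) > (1-2\varepsilon)\mu(B_j)$ for both $j$, so both halves keep high density and in particular positive measure. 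Interleaving a \emph{(Refine)} before every \emph{(Split)} keeps the density parameters from blowing up across the infinitely many splits and guarantees that each $T_n$ splits infinitely often, hence is perfect.

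Finally I would verify $\prod_n C_n \subseteq D$. Fix $x = (x_n) \in \prod_n [T_n]$. For each $s$ the point $x$ lies in a unique stage-$s$ leaf-box $B_s(x)$, and since all coordinates $\le s$ are pinned to level $\ell_s$ while the later coordinates carry weight at most $2^{-n}$ in the product metric, $\operatorname{diam}(B_s(x)) \to 0$ as $s \to \infty$. The invariant gives $\mu(D \cap B_s(x)) > 0$, so each $B_s(x)$ meets $D$ in a point $y_s$, and $d(y_s, x) \le \operatorname{diam}(B_s(x)) \to 0$; as $D$ is closed, $x = \lim_s y_s \in D$. The main obstacle is the bookkeeping of the fusion: one must split every coordinate infinitely often and deepen every committed stump cofinally while keeping all the (now infinitely many) leaf-boxes simultaneously of high density. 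This is exactly what the alternation of the density-refinement step with the factor-of-two bound on the splitting step is designed to control, and getting the quantifiers in the right order — choosing $\varepsilon'$ small enough \emph{before} each split so that the doubled parameters stay summably small — is where the care is needed.
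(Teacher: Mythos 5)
Your skeleton is sound and is essentially the standard route: inner regularity to replace $D$ by a compact $K$ of positive measure, a fusion of finite stumps with a high-density invariant on leaf-boxes, the factor-of-two bound at splits (your computation $\mu(D \cap B_j) \geq (1-\varepsilon)\mu(B) - \tfrac{1}{2}\mu(B) = (1-2\varepsilon)\mu(B_j)$ is correct), and the closing limit argument (which is fine, since positive measure gives nonempty intersection with each shrinking box and $K$ is closed). Note that the paper itself contains no proof of this lemma --- it quotes it as Claim 1.1.4 of Ciesielski--Pawlikowski --- so the only comparison is with their argument, which yours resembles in outline.

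There is, however, one genuine gap, and it sits at the heart of the \emph{Refine} step. You apply the density theorem \emph{inside each leaf-box separately}: ``given a leaf-box $B$ \dots produces a smaller box $B' \subseteq B$.'' But a stump can extend a given leaf in only one way per refinement, while the same leaf $\sigma$ of $t_0$ participates in one leaf-box for \emph{each} choice of leaves of the other stumps; the dense sub-box found inside $[\sigma]\times[\tau]\times\cdots$ will in general prescribe an extension of $\sigma$ incompatible with the one prescribed inside $[\sigma]\times[\tau']\times\cdots$. So ``deepening all the stumps so as to reach such a $B'$ simultaneously inside each leaf-box'' is precisely the assertion that needs proof, and per-box density does not deliver it. (A second, related point: the relevant density theorem is L\'evy's martingale convergence along a filtration exhausting \emph{all} coordinates; deepening only the already-committed coordinates need not improve density at all --- if $D$ depends only on a coordinate not yet stumped, the density in every such box is constant.) The repair is a simultaneous selection argument: working at depth $\ell$ along the coordinate-exhausting filtration, the proportion of depth-$\ell$ sub-boxes of a leaf-box $B$ with defect $\geq \varepsilon'$ tends to the relative measure of $B \setminus K$, hence is below $\varepsilon_s$ for $\ell$ large; now extend each leaf of each stump \emph{independently and uniformly at random}, so that within each fixed leaf-box the induced sub-box is uniformly distributed, and apply a union bound over the $m_s$ leaf-boxes. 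If the invariant includes $m_s \varepsilon_s < 1$, some single coherent choice of extensions makes every combination have defect $< \varepsilon'$ at once, after which your quantifier ordering works: choose $\varepsilon_{s+1}$ only after counting the boxes the next splits will create, so that $m_{s+1}\varepsilon_{s+1} < 1$ is restored, seeding the induction with a single box of defect $< 1/2$ from the density theorem. Without some such coherence device (random selection, or a greedy pass over leaves using the fact that the average defect over extensions equals the current defect), the construction as written can fail to maintain its invariant.
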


The proof of the following is essentially the same as the proof of Theorem 1.1.4 of \cite{CiPa}, which shows that CPA$_{cube}(P)$ implies that universally null sets have cardinality less than or equal to $\aleph_{1}$.

\begin{thrm} If $P$ is a Polish space and CPA$_{cube}(P)$ holds, then every universally measurable set is the union of at most $\aleph_{1}$ many sets, each of which
is either a perfect set or a singleton.
\end{thrm}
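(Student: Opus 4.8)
**

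The plan is to mimic the Ciesielski--Pawlikowski proof that CPA$_{cube}(P)$ implies universally null sets have cardinality at most $\aleph_1$, replacing ``measure zero'' arguments with ``measurable'' arguments at the appropriate points. Let $A$ be a universally measurable subset of $P$. The key move is to define a family $\mathcal{E} \subseteq Perf(P)$ consisting of those perfect sets $Q$ that are entirely contained in $A$ or entirely contained in $P \setminus A$, and then show this $\mathcal{E}$ is $\mathcal{F}_{cube}$-dense. Applying CPA$_{cube}(P)$ to $\mathcal{E}$ yields $\mathcal{E}_0 \subseteq \mathcal{E}$ with $|\mathcal{E}_0| \le \aleph_1$ and $|P \setminus \bigcup \mathcal{E}_0| \le \aleph_1$. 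Each member of $\mathcal{E}_0$ is, by construction, either inside $A$ or inside $P \setminus A$, so $A$ is the union of those members of $\mathcal{E}_0$ lying inside $A$, together with the set $A \cap (P \setminus \bigcup \mathcal{E}_0)$, which has size at most $\aleph_1$ and so contributes at most $\aleph_1$ singletons. This gives the desired decomposition of $A$ into at most $\aleph_1$ perfect sets and singletons.

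The heart of the argument is verifying $\mathcal{F}_{cube}$-density of $\mathcal{E}$. By the first quoted lemma, it suffices to show that for every continuous injection $f \colon \Pi_{n \in \omega}\creals \to P$ there is a cube $g \subseteq f$ whose range lies in $\mathcal{E}$. So fix such an $f$. Using Theorem \ref{isom}, I would transport an atomless Borel probability measure from $P$ (supported near $\operatorname{range}(f)$, via the Borel isomorphism between $\Pi_{n}\creals$ and a subspace of $P$) back to the usual product measure on $\Pi_{n \in \omega}\creals$, so that $f^{-1}[A]$ becomes a measurable subset $D$ of $\Pi_{n \in \omega}\creals$ precisely because $A$ is universally measurable. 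The set $D$ and its complement partition $\Pi_{n}\creals$ into two measurable pieces, and one of them has positive measure. Lemma \ref{secondquote} then provides a product of perfect sets $\Pi_{n \in \omega}C_n$ contained in that positive-measure piece. Restricting $f$ to $\Pi_{n} C_n$ gives a cube $g \subseteq f$ whose range is a subset of $A$ (or of $P \setminus A$), hence lies in $\mathcal{E}$.

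The main obstacle, and the point where I expect to spend the most care, is the transfer of universal measurability through $f$: I must ensure that $f^{-1}[A]$ is genuinely measurable with respect to the product measure on $\Pi_{n \in \omega}\creals$. This is where universal measurability (rather than universal nullity) is exactly what is needed — in the Ciesielski--Pawlikowski original, one only needs $f^{-1}[A]$ to be null, but here I need it merely to be measurable so that one of $f^{-1}[A]$, $\Pi_n\creals \setminus f^{-1}[A]$ has positive measure. The cleanest route is to push the product measure forward along $f$ to a Borel measure on $\operatorname{range}(f) \subseteq P$, invoke universal measurability of $A$ there, and pull back measurability along the continuous injection $f$; some attention is required because $f$ need not be onto and $\operatorname{range}(f)$ is only analytic, so I would work with the image measure on $P$ and use that measurable sets pull back to measurable sets under the Borel map $f$. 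Once measurability is in hand, the remaining steps are exactly as in the universally null case, which is why the theorem is, as the paper says, a very slight modification of the original.
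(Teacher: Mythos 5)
Your proposal is correct and takes essentially the same approach as the paper: the paper likewise defines $\mathcal{E}$ as the perfect subsets of $P$ contained in or disjoint from $A$, pushes the product measure forward along $f$ to a Borel measure $\mu$ on $P$, uses universal measurability to obtain Borel sets $B$, $N$ with $A \bigtriangleup B \subseteq N$ and $\mu(N) = 0$, and pulls back a positive-measure Borel piece to apply Lemma \ref{secondquote}. The only differences are cosmetic: your detour through Theorem \ref{isom} is unnecessary and is in effect abandoned for the paper's pushforward route in your final paragraph, and your worry about $\operatorname{range}(f)$ being merely analytic is moot since the pushforward measure is defined directly on all Borel subsets of $P$ (indeed, $\Pi_{n \in \omega}\creals$ is compact, so the range is closed).
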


\begin{proof} Let $A \subseteq P$ be universally measurable, and suppose that CPA$_{cube}(P)$ holds. Let $\mathcal{E}$ be the collection of perfect subsets of $P$ which are either contained in or disjoint from $A$. It suffices to see that $\mathcal{E}$ is $\mathcal{F}_{cube}$-dense. This follows almost immediately from the two lemmas above. Let $f \colon \Pi_{n \in \omega}\creals \to P$ be a continuous injection, and let $\mu$ be the Borel measure on $P$ defined by letting $\mu(I)$ be the measure of $f^{-1}[I]$ in the standard product measure on $\Pi_{n \in \omega}\creals$. Then there exist Borel subsets $B$, $N$ of $P$ such that $A \bigtriangleup B \subset N$ and $\mu(N) = 0$. Then
one of $B \setminus N$ and $(P \setminus B) \setminus N$ has positive $\mu$-measure, so by Lemma \ref{secondquote} there is a function $g \subseteq f$ as desired.
\end{proof}

In the model obtained by forcing over a model of the {\sf GCH} with an $\omega_{2}$-length countable support iteration of Sacks forcing, CPA$_{cube}(P)$ holds for every Polish space $P$, along with the equation $\mathfrak{c} = 2^{\aleph_{1}} = \aleph_{2}$ (see pages 52, 143-144 and 159-160 of \cite{CiPa}).

\section{Universal measurability and the Baire property}

A fundamental question about universally measurable sets, which remains open, was asked by Mauldin, Preiss and Weizs\"{a}cker \cite{MaPrWe} in
1983 :

\begin{ques}
Is it consistent that every universally measurable set of reals
has the Baire property?
\end{ques}

One could also ask whether it consistent that every universally measurable set of reals has universally null symmetric difference
with a set with the Baire property. The following theorem shows that these two questions are equivalent.

\begin{thrm}
\label{UMB-counter}
If $S$ is a universally measurable set of reals without the Baire property, then
$S \times \mathbb{R}$ is a universally measurable set which does not have
universally null symmetric difference with any set with the Baire property.
\end{thrm}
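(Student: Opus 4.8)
The plan is to prove the contrapositive-style claim by transferring both properties from $S$ to the product $S \times \mathbb{R}$. The theorem has two assertions: that $S \times \mathbb{R}$ is universally measurable, and that $S \times \mathbb{R}$ does \emph{not} have universally null symmetric difference with any set having the Baire property. First I would verify that universal measurability is preserved under taking products with $\mathbb{R}$. Since $S$ is universally measurable, for any finite Borel measure on $\mathbb{R}^2$ I would use the disintegration / Fubini structure of the measure together with Theorem \ref{isom} (to reduce to a product measure if convenient) to find Borel approximants $B, N$ for $S$ off a null set, and then $B \times \mathbb{R}$, $N \times \mathbb{R}$ serve as the Borel approximants for $S \times \mathbb{R}$. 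The essential point is that a product $S \times \mathbb{R}$ is measured section-by-section, so measurability of $S$ against every measure on $\mathbb{R}$ yields measurability of $S \times \mathbb{R}$ against every measure on $\mathbb{R}^2$.

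Second, and this is where the real content lies, I would show that $S \times \mathbb{R}$ cannot differ from a Baire-property set by a universally null set. Suppose for contradiction that there is a set $T$ with the Baire property and that $(S \times \mathbb{R}) \bigtriangleup T$ is universally null. The strategy is to recover a Baire-property approximation of $S$ itself from $T$, contradicting the hypothesis that $S$ fails the Baire property. The natural mechanism is a fiber/section argument combined with the Kuratowski--Ulam theorem (the category analogue of Fubini): a universally null subset $U$ of $\mathbb{R}^2$ has all vertical sections $U_x$ universally null in $\mathbb{R}$, but more usefully, I would argue that $U$ is meager off a set of $x$'s that is itself small in an appropriate sense, or directly that comeagerly many horizontal sections of $U$ are meager. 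The aim is to fix a single value $y \in \mathbb{R}$ for which the horizontal section $T^y = \{x : (x,y) \in T\}$ has the Baire property (which it does for comeagerly many $y$, by Kuratowski--Ulab applied to $T$) and simultaneously $(S \times \mathbb{R}) \bigtriangleup T$ has meager $y$-section; for such a $y$, the $y$-section of $S \times \mathbb{R}$ is exactly $S$, so $S \bigtriangleup T^y$ is meager, whence $S$ has the Baire property, a contradiction.

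The hard part will be reconciling the two notions of smallness: universally null is a measure-theoretic notion, whereas the Baire property lives in the category world, and a universally null set need not be meager. So I cannot directly conclude that the $y$-sections of $(S \times \mathbb{R}) \bigtriangleup T$ are meager merely from universal nullity. The key insight I expect to need is that universal nullity of a subset $U$ of $\mathbb{R}^2$ still forces \emph{most} vertical sections $U_x$ to be universally null in $\mathbb{R}$ (by the Fubini-type behavior of universally null sets, or by testing against product measures using Theorem \ref{isom}), and a universally null subset of $\mathbb{R}$ has empty interior in every nonempty perfect set, so in particular is nowhere dense on a comeager set of fibers only if we have additional uniformity. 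The cleanest route, which I would pursue, is to exploit the translation-invariance inherent in the product $S \times \mathbb{R}$: because the second coordinate is an entire copy of $\mathbb{R}$, the set $S \times \mathbb{R}$ is invariant under all vertical translations, and I would run a category--measure duality argument (or a direct Kuratowski--Ulam argument on $T$) to locate a suitable $y$.

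Concretely, I would argue as follows. By Kuratowski--Ulam, since $T$ has the Baire property, for comeagerly many $y$ the section $T^y$ has the Baire property in $\mathbb{R}$. Separately, because $U = (S \times \mathbb{R}) \bigtriangleup T$ is universally null, I would show that the set of $y$ for which $U^y = S \bigtriangleup T^y$ fails to be meager is itself meager: if not, then $U$ would be non-meager by Kuratowski--Ulam and hence contain, on a perfect set of parameters, a set of positive measure for some suitable product measure built via Theorem \ref{isom}, contradicting universal nullity. Thus I can fix a single $y$ witnessing both $T^y \in$ (Baire property) and $S \bigtriangleup T^y$ meager, forcing $S$ to have the Baire property. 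The main obstacle, as noted, is making the step ``$U$ non-meager $\Rightarrow$ $U$ not universally null'' rigorous; I expect to discharge it by noting that a set with the Baire property that is non-meager contains a comeager piece of some basic open set, and such a piece supports a Borel probability measure giving it positive mass, contradicting universal nullity of $U$ (using that $U$, or rather a Borel set inside it, is non-meager — here I would first intersect with a Borel approximant of $U$ arising from its own Baire property, which $U$ possesses since universally null sets are universally measurable and, with more care, Baire-approximable).
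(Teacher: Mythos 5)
Your first half is fine and matches the paper: the projection measure $\nu(I) = \mu(I \times \mathbb{R})$ reduces universal measurability of $S \times \mathbb{R}$ to that of $S$, exactly as in the paper's proof. The second half, however, has a genuine gap at its central step, namely the claim that ``because $U = (S \times \mathbb{R}) \bigtriangleup T$ is universally null, the set of $y$ for which the horizontal section $U^{y} = S \bigtriangleup T^{y}$ fails to be meager is itself meager.'' Universal nullity carries no category information, and within your own reductio the exact opposite holds: for comeagerly many $y$ the section $T^{y}$ has the Baire property (Kuratowski--Ulam applies to $T$, which legitimately has the Baire property), and for \emph{every} such $y$ the section $U^{y} = S \bigtriangleup T^{y}$ is automatically non-meager --- otherwise $S$ would differ from a Baire-property set by a meager set and so would itself have the Baire property. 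So the set of $y$ you need to be meager is in fact comeager, and no single $y$ with both of your desired properties can exist. Your proposed repair is circular: you want to apply Kuratowski--Ulam to $U$ and to intersect with a ``Baire approximant'' of $U$, but $U$ is not known to have the Baire property --- the assertion that universally measurable (or universally null) sets are Baire-approximable is precisely the open question of Mauldin, Preiss and Weizs\"acker that this section of the paper addresses, and Theorem \ref{UMB-counter-null} shows (e.g.\ under $cov(\mathcal{M}) = cof(\mathcal{M})$) that nonmeager universally null sets exist, so ``non-meager implies not universally null'' is simply false.

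The paper's proof slices in the other direction, where the category-to-measure bridge points the right way. After normalizing so that $S$ is neither meager nor comeager in any interval and $P$ is meager or comeager in a rectangle, it finds a single $x$ such that the symmetric difference contains a vertical fiber $\{x\} \times C$ with $C \subseteq \mathbb{R}$ comeager: if $P$ is meager, take $x \in S$ with $P_{x}$ meager (possible since $S$ is non-meager while comeagerly many $P_{x}$ are meager), so that $(S \times \mathbb{R}) \setminus P \supseteq \{x\} \times (\mathbb{R} \setminus P_{x})$; symmetrically, if $P$ is comeager, take $x \notin S$ with $P_{x}$ comeager. A comeager set contains a perfect set and hence supports an atomless Borel probability measure concentrating on $\{x\} \times C$, so the symmetric difference is not universally null. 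Note the direction of implication actually used: \emph{comeager implies not universally null} (true, via perfect sets), rather than your needed \emph{universally null implies meager sections} (false). Your fiber idea is salvageable only by switching to this vertical-fiber form of the argument.
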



\begin{proof}
To see that $S \times \mathbb{R}$ is universally measurable, let
$\mu$ be a finite atomless Borel measure on $\mathbb{R} \times \mathbb{R}$.
Let $\nu$ be the projection measure on $\mathbb{R}$ given by
$\nu(I) = \mu(I \times \mathbb{R})$. Then $\mu(S \times
\mathbb{R}) = \nu(S)$.



Suppose towards a contradiction that $S\times\mathbb{R}$ has universally
null symmetric difference with $P\subseteq
\mathbb{R}\times\mathbb{R}$, and that $P$ has the property of Baire.
Since $S$ does not have the Baire property, there is an open interval
in which $S$ is neither meager nor comeager in any subinterval.
By restricting to this interval if necessary, we may assume that $S$ itself
is neither meager nor comeager in any interval.
Furthermore, we may assume without loss of generality that $P$ is meager or
comeager in $\mathbb{R}\times\mathbb{R}$, since we can pass to an open
rectangle where this is the case, and replace $S$ with its restriction
to the $x$-axis of the cube, where it still does not have the Baire property.

We now derive a contradiction to the fact that the symmetric
difference of $S\times\mathbb{R}$ and $P$ is universally null. We
shall use the fact that if $x\in \mathbb{R}$ and $Y\subseteq
\mathbb{R}$ is comeager, then $\{x\}\times Y$ is not universally
null. To see this note that $Y$ contains a perfect set, and so
there are non-atomic measures concentrating on $\{x\}\times Y$.

Suppose first that $P$ is meager in $\mathbb{R}\times\mathbb{R}$.
Then the set of $x\in\mathbb{R}$ so that $P_x=\{y\mid \langle
x,y\rangle\in P\}$ is meager, must be comeager. Since $S$ is not
meager it cannot be contained in the complement of this set. So
there is $x\in S$ with $P_x$ meager. Then
$(S\times\mathbb{R})\setminus P \supseteq
\{x\}\times(\mathbb{R}\setminus P_x)$, and since
$\mathbb{R}\setminus P_x$ is comeager, this set is not universally
null.


Suppose next that $P$ is comeager in $\mathbb{R}\times\mathbb{R}$.
Then the set of $x\in\mathbb{R}$ so that $P_x=\{y\mid \langle
x,y\rangle\in P\}$ is comeager, must be comeager. Since $S$ is not
comeager there must be $x$ in this set which does not belong to
$S$, namely there must be $x\not\in S$ with $P_x$ comeager. Then
$P\setminus (S\times\mathbb{R})\supseteq \{x\}\times P_x$, and
since $P_x$ is comeager, this set is not universally null.
\end{proof}

In most natural models one may consider, there are universally
measurable sets without the property of Baire.
First
let us note that in many cases the existence of such sets is a
consequence of the existence of a medial limit. A \emph{medial
limit} is a universally measurable function from
$\mathcal{P}(\omega)$ to $[0,1]$ which is finitely additive for
disjoint sets, and maps singletons to $0$ and $\omega$ to $1$. Godefroy and
Talagrand showed \cite{GoTa} that if $f$ is a medial limit, then
$f^{-1}[\{1\}]$ is a universally measurable filter without the property
of Baire (their argument can be easily modified to show that $f^{-1}[\{1\}]$ does
not have universally null symmetric difference with a set with the property of Baire, either).
Many models have medial limits. For
example, the cardinal invariant equation
$cov(\mathcal{M}) = \mathfrak{c}$ implies the existence of medial
limits (see \cite{F5}; this is a consequence of Martin's Axiom).
Theorem \ref{univrei} can be used to show, using Borel reinterpretations, that there is a medial limit in a
$\mathbb{B}(X)$ extension if there is one in the ground model.
The first author has recently shown \cite{La?} that consistently all universally measurable filters on $\omega$ have the property of Baire,
and therefore consistently there are no medial limits. A model in which there are no
medial limits is obtained by iterating super-perfect tree forcing (Miller forcing)
$\omega_2$ times, starting from a model of the {\sf CH}. In this model there is a universally
null set of reals of size $\aleph_{1}$ without the property of Baire \cite{LaBr}. Note that any universally null set with the property of Baire
must be meager, since otherwise it would contain a perfect set.

The argument below proves the existence of a nonmeager universally null set from the cardinal invariant equation $cov(\mathcal{M}) = cof(\mathcal{M})$. The invariant $cov(\mathcal{M})$ is the smallest cardinality of a collection of meager sets of reals whose union is the entire real line, while $cof(\mathcal{M})$ is the smallest cardinality of a collection of meager sets such that every meager set is contained in a member of the collection. It follows immediately from the definitions that $\aleph_{1} \leq cov(\mathcal{M}) \leq cof(\mathcal{M}) \leq \mathfrak{c}$. In the model obtained by the super-perfect iteration described in the previous paragraph, $cov(\mathcal{M}) = \aleph_{1}$ and $cof(\mathcal{M}) = \aleph_{2} = \mathfrak{c}$
(see \cite{Bl}).






\begin{thrm}
\label{UMB-counter-null}
Suppose that $cof(\mathcal{M}) = cov(\mathcal{M})$. Then there is
a nonmeager universally null set.
\end{thrm}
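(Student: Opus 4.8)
The plan is to produce a \emph{generalized Luzin set} and then observe that, under the hypothesis, any such set is automatically universally null. Write $\kappa = cov(\mathcal{M}) = cof(\mathcal{M})$. Since $cof(\mathcal{M}) = \kappa$, I would fix an increasing cofinal family $\{M_\alpha : \alpha < \kappa\}$ of meager $F_\sigma$ sets, so that every meager set is contained in some $M_\alpha$. Using $cov(\mathcal{M}) = \kappa$, construct $X = \{x_\alpha : \alpha < \kappa\}$ by recursion: at stage $\alpha$ the set $M_\alpha \cup \{x_\beta : \beta < \alpha\}$ is a union of fewer than $\kappa = cov(\mathcal{M})$ meager sets (one meager set together with $|\alpha|$ singletons), hence is not all of $\mathbb{R}$, so I may choose $x_\alpha$ outside it. Because the $M_\alpha$ increase, $x_\gamma \notin M_\alpha$ for all $\gamma \geq \alpha$, so $X \cap M_\alpha \subseteq \{x_\beta : \beta < \alpha\}$ has cardinality less than $\kappa$ for every $\alpha$. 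Thus $X$ meets every meager set in a set of size strictly less than $\kappa = |X|$, and in particular $X$ is nonmeager (if $X$ were meager it would lie in some $M_\alpha$, forcing $|X| = |X\cap M_\alpha| < \kappa$).

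The heart of the argument is to show that this $X$ is universally null. By the reductions recorded in Section 1 it suffices to prove $\mu^*(X) = 0$ for every atomless Borel probability measure $\mu$ on $\mathbb{R}$. The key lemma I would establish is that every such $\mu$ \emph{concentrates on a meager set}: there is a meager $F_\sigma$ set $G$ with $\mu(G) = 1$. This is proved directly from atomlessness alone: for each $n$ build a dense open set $U_n$ with $\mu(U_n) < 2^{-n}$ by covering a countable base with open subintervals of small measure, which exist because the distribution function of $\mu$ is continuous; then $\bigcap_n U_n$ is a comeager $\mu$-null set and its complement $G$ is a meager $F_\sigma$ of full measure.

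Granting the lemma, fix an atomless Borel probability measure $\mu$ and a meager $F_\sigma$ set $G$ with $\mu(G) = 1$. By cofinality of the family, $G \subseteq M_\alpha$ for some $\alpha$, whence $\mu(M_\alpha) = 1$. Now $X \setminus M_\alpha$ is contained in the $\mu$-null Borel set $\mathbb{R} \setminus M_\alpha$, while $X \cap M_\alpha$ has cardinality less than $\kappa$. By Rothberger's inequality $cov(\mathcal{M}) \leq non(\mathcal{N})$ we have $\kappa \leq non(\mathcal{N})$, and by the isomorphism theorem (Theorem \ref{isom}) every set of cardinality less than $non(\mathcal{N})$ is null for every atomless Borel measure; hence $X \cap M_\alpha$ is $\mu$-null. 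By subadditivity of outer measure, $\mu^*(X) = 0$, as required. (The passage from atomless $\sigma$-finite measures to probability measures is the standard one noted in Section 1.)

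The main obstacle is the concentration lemma of the second paragraph: it is the one genuine bridge from category to measure, and it is exactly what lets a single category-theoretic construction defeat all continuum-many measures simultaneously, even when $\kappa < \mathfrak{c}$ (as happens, e.g., in the Sacks model). Everything else is bookkeeping: the recursion is the textbook construction of a generalized Luzin set, which succeeds \emph{precisely} because $cov(\mathcal{M}) = cof(\mathcal{M})$ (so that at every stage $\alpha < \kappa = cof(\mathcal{M})$ one has used only $cov(\mathcal{M})$-many meager sets), and the inequality $cov(\mathcal{M}) \leq non(\mathcal{N})$ together with Theorem \ref{isom} is classical. I would double-check only that the two uses of the cofinal family are compatible and that the outer-measure estimate is applied to the genuinely small set $X \cap M_\alpha$ rather than to all of $X$.
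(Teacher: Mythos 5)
Your argument has one genuine gap, and it is at the very first step: from $cof(\mathcal{M})=\kappa$ alone you cannot ``fix an increasing cofinal family $\{M_\alpha : \alpha<\kappa\}$'' of meager sets. A $\subseteq$-increasing chain that is cofinal in the meager ideal and has regular length $\kappa$ forces $add(\mathcal{M})=\kappa$: given fewer than $\kappa$ meager sets, each lies in some $M_{\alpha_i}$, the supremum of the indices is below $\kappa$, and so the union lies in a single $M_\alpha$ and is meager. But $add(\mathcal{M})=cof(\mathcal{M})$ is strictly stronger than the hypothesis $cov(\mathcal{M})=cof(\mathcal{M})$: in the model obtained by adding $\aleph_2$ Cohen reals over a model of CH one has $cov(\mathcal{M})=\mathfrak{c}=\aleph_2$ and $cof(\mathcal{M})=\max\{\mathfrak{d}, non(\mathcal{M})\}=\aleph_2$, so the hypothesis holds, while $add(\mathcal{M})=\min\{\mathfrak{b}, cov(\mathcal{M})\}=\aleph_1$, so no increasing cofinal chain of length $\aleph_2$ can exist there. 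Your recursion uses increasingness essentially: it is exactly what gives $x_\gamma\notin M_\alpha$ for $\gamma\geq\alpha$, i.e., the Luzin property $|X\cap M_\alpha|<\kappa$, on which both the nonmeagerness and the nullity arguments rest.

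The repair is short, and it is precisely what the paper does: take an arbitrary (not necessarily increasing) cofinal family $\{D_\alpha : \alpha<\kappa\}$ and at stage $\alpha$ choose $x_\alpha\notin\bigcup_{\beta\leq\alpha}D_\beta\cup\{x_\beta : \beta<\alpha\}$. This avoided set is a union of $|\alpha|+1<\kappa=cov(\mathcal{M})$ meager sets together with fewer than $\kappa$ singletons, so the choice is possible --- and this accumulation over all earlier $D_\beta$ is exactly where the hypothesis $cov(\mathcal{M})=cof(\mathcal{M})$ earns its keep, rather than in any pre-processing of the cofinal family. With that change the rest of what you wrote goes through and coincides with the paper's proof: your concentration lemma (every atomless Borel probability measure vanishes on a comeager set) is correct and is the fact the paper quotes from Oxtoby; the split of $X$ into $X\setminus D_\beta$ (covered by the $\mu$-null Borel complement of $D_\beta$) and $X\cap D_\beta$ (of size $<\kappa\leq non(\mathcal{N})$ by Rothberger's inequality, hence $\mu$-null via Theorem \ref{isom}) is the paper's computation; and the nonmeagerness argument is the same. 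One further remark: the paper's proof additionally interleaves countable dense sets $Y_\alpha$ so that the resulting set is not comeager in any nonempty open set, hence fails the Baire property outright; your version omits this, which is fine for the statement as given (nonmeager suffices) but loses that extra conclusion, which is relevant to the surrounding discussion in the paper.
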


\begin{proof}
Let $\kappa = cof(\mathcal{M})$ and suppose that $\{D_{\alpha} :
\alpha < \kappa\}$ is a collection of meager sets such that every
meager set is contained in some $D_{\alpha}$. Pick reals
$x_{\alpha}$ $(\alpha < \kappa$) and countable sets $Y_{\alpha}$
$(\alpha < \kappa)$ such that:

\begin{itemize}
\item
each $Y_{\alpha}$ is a countable dense subset of $\mathbb{R}
\setminus (D_{\alpha} \cup \{ x_{\beta} : \beta < \alpha\})$;
\item
each $x_{\alpha} \in \mathbb{R} \setminus \bigcup_{\beta \leq
\alpha} (D_{\beta} \cup
Y_{\beta})$.
\end{itemize}
Note that the unions $\bigcup_{\beta \leq \alpha} (D_{\beta} \cup
Y_{\beta})$ do not cover any nonempty open subset of $\mathbb{R}$, since $\alpha<
cov(\mathcal{M})$. We use here the assumption that
$cov(\mathcal{M})=cof(\mathcal{M})$.

Then $\{x_{\alpha} : \alpha < \kappa\}$ is not meager, since it is
not contained in any $D_{\beta}$.

Also, $\{x_{\alpha} : \alpha < \kappa\}$ is not comeager in any
nonempty open set $O$, since for any meager set $M$ there is a
$\beta < \kappa$ such that $M \subseteq D_{\beta}$, and
$(Y_{\beta} \setminus D_{\beta}) \cap O$ is nonempty and
disjoint from $\{x_{\alpha} : \alpha < \kappa\}$.

Finally, since for any measure $\mu$ there is a comeager set $C$
such that $\mu(C)=0$ (see, for example \cite{O}), there is some $\beta<\kappa$ such that
$\mu(\mathbb{R} \setminus D_{\beta}) = 0$. So $\mu(\{ x_{\alpha} :
\alpha < \kappa\}) = 0$, since $\{ x_{\alpha} : \alpha < \beta\}$
is universally null by virtue of having cardinality less than
$\kappa=cov(\mathcal{M})\leq non({\cal N})$ (see \cite{BaJu} for
the last inequality; $non({\cal N})$ is the least cardinality of a
non-null set), and $\{ x_{\alpha} : \beta \leq \alpha < \kappa\}
\subseteq \mathbb{R} \setminus D_{\beta}$.
\end{proof}

Theorem \ref{UMB-counter-null} has been recently improved \cite{LaBr}, with the hypothesis weakened to the
assumption that $cof(\mathcal{M}) = \min \{ non(\mathcal{N}), \mathfrak{d}\}$, where
$\mathfrak{d}$ is the dominating number (see \cite{BaJu}).
We note that in the $\mathbb{B}(\mathfrak{c})$ extension of a model of the {\sf CH} (as well as the
corresponding iterated random real model, see \cite{Bl}), universally null sets of reals have size at most
$\aleph_{1}$, and sets of reals of size $\aleph_{1}$ are meager.

\noindent Department of Mathematics,
Miami University, Oxford, Ohio 45056, USA; Email: {\tt
larsonpb@muohio.edu} \vspace{\baselineskip}

\noindent Department of Mathematics,
University of California Los Angeles
Los Angeles, CA 90095-1555, USA; Email: {\tt
ineeman@math.ucla.edu}\vspace{\baselineskip}

\noindent The Hebrew University of Jerusalem, Einstein Institute of
Mathematics,\\ Edmond J. Safra Campus, Givat Ram, Jerusalem 91904,
Israel \vspace{\baselineskip}

\noindent Department of Mathematics, Hill Center - Busch
Campus, Rutgers, The State University of New Jersey, 110
Frelinghuysen Road, Piscataway, NJ 08854-8019, USA; Email: {\tt
shelah@math.huji.ac.il}

\end{document}